\documentclass[sn-mathphys-num]{sn-jnl}


\usepackage{graphicx}%
\usepackage{multirow}%
\usepackage{amsmath,amssymb,amsfonts}%
\usepackage{amsthm}%
\usepackage{mathrsfs}%
\usepackage[title]{appendix}%
\usepackage{textcomp}%
\usepackage{manyfoot}%
\usepackage{booktabs}%
\usepackage{algorithm}%
\usepackage{algorithmicx}%
\usepackage{algpseudocode}%
\usepackage{listings}%
\usepackage{xcolor}
\usepackage{cancel}
\usepackage{mathtools}
\usepackage{subcaption}
\usepackage{enumitem}
\usepackage{makecell, tabularx}


\newtheorem{theorem}{Theorem}[section]
\newtheorem{corollary}[theorem]{Corollary}
\newtheorem{definition}[theorem]{Definition}
\newtheorem{lemma}[theorem]{Lemma}
\newtheorem{remark}[theorem]{Remark}

\newtheorem{example}[theorem]{Example}
\newtheorem*{claim*}{Claim}
\newtheorem{ancillary}[theorem]{Ancillary Info}

\raggedbottom

\begin{document}

\title[The Associated Discrete Laplacian and Mean Curvature]{The Associated Discrete Laplacian in $\mathbb{R}^3$ and Mean Curvature with Higher--order Approximations.}


\author*[]{\fnm{Wei-Hung} \sur{Liao}\orcid{https://orcid.org/0000-0002-9915-1030}}\email{roger2300245@gmail.com}
\affil*[]{\orgname{Shanghai Institute for Mathematics and Interdisciplinary Sciences}, 
\city{Shanghai}, \postcode{200433}, \country{People's Republic of China}}


\abstract{\iffalse In the geometric processing of triangular meshes, the discrete Laplacian plays an important role in performing image data analysis, including mesh refinement, parameterization, segmentation, remeshing, simulation, etc. Among the many discretizations of the Laplacian, the cotangent Laplacian is widely used because it maintains most of the properties of the continuous Laplacian. It can be obtained from considering piecewise linear values on the original vertices or associated values on dual vertices, either of which results in the same discrete cotangent Laplacian in the triangular mesh setting.\fi In $\mathbb{R}^3$, the primal and dual constructions yield completely different discrete Laplacians for tetrahedral meshes.\iffalse We apply this geometrical property to propose a conformal algorithm to obtain a conformal mapping between a closed genus--zero surface and a sphere.\fi In this article, we prove that the discrete Laplacian satisfies the Euler-Lagrange equation of the Dirichlet energy in terms of the associated discrete Laplacian corresponding to the dual construction. Specifically, for a three--simplex immersed in $\mathbb{R}^3$, the associated discrete Laplacian on the tetrahedron can be expressed as the discrete Laplacian of the faces of the tetrahedron and the associated discrete mean curvature term given by the ambient space $\mathbb{R}^3$. Based on geometric foundations, we provide a mathematical proof showing that the dual construction gives a optimal Laplacian in $\mathbb{R}^3$ compared to the primal construction. Moreover, we show that the associated discrete mean curvature is more sensitive to the initial mesh than other state-of-the-art discrete mean curvatures when the angle changes instantaneously. Instead of improving the angular transient accuracy through mesh subdivision, we can improve the accuracy by providing a higher--order approximation of the instantaneous change in angle to reduce the solution error.}

\keywords{discrete differential geometry, discrete dual Laplacian, discrete mean curvature, higher--order approximation}


\pacs[MSC Classification]{ 	53C43 (Primary); 52B70, 53A05, 47A60}

\maketitle

\section{Introduction}
With the rapid development of imaging techniques and the proliferation of sensor-acquired 3D images, techniques for processing and analysing image data have become very important. In recent years, the discretization of differential operators has been an exciting area of numerical analysis, geometric processing and computational engineering. The energy function space of a manifold is determined by the geometrical characteristics of the manifold. By studying its Euler-Lagrange equations, it can reflect much of the global geometric behaviour of the manifold. One of the energy functions commonly considered is the Dirichlet energy, so the discrete Laplacian becomes very important in numerical analysis, computer graphics and geometry processing. According to the discretization of Laplacian, discrete operators will inherit different properties. For triangular surface meshes, Wardetzky et al. \cite{wardetzky2007} classified several important properties and analysed the connection between these properties for different discretization and continuous operators. The cotangent discretization is widely used in geometric processing, and it satisfies all of the above desirable properties if the mesh has Delaunay properties, and for any fixed set of function values it also minimizes the Dirichlet energy of the piecewise linear interpolation function, lacking the maximum principle only if the mesh is not Delaunay. There are several ways to derive the cotangent Laplacian on simplicial meshes.

For triangle meshes, the primal and dual constructions of the Laplacian lead to the same discrete cotangent--weight Laplacian, as shown in \cite{pinkall1993}. The familiar formula for the primal construction of the Laplacian in dimensions equal to $3$ first appeared in \cite{desbrun1999}, but they omitted the important length term. Later, Gu et al. \cite{gu2003volumetric} gave a definition of the primal discrete Laplacian on a tetrahedral mesh for the study of heat flow problems to minimise harmonic energy, and then Chao et al. \cite{chao2010simple} considered the same discrete Laplacian in the context of elasticity and function interpolation. For higher dimensions, the primal discrete Laplacian is considered in \cite{crane2019n}.

In 2020, Alexa et al. \cite{alexa2020properties} provided explicit formula for the lesser-known dual construction of the Laplacian on tetrahedral meshes. The primal and dual approaches produce the same discrete Laplacian in triangular meshes. However, they have completely different representations and properties in tetrahedral meshes.

\begin{definition}
Suppose $\mathcal{N}^3$ is a tetrahedral mesh embedded in $\mathbb{R}^3$. A discrete Laplacian defined on $\mathcal{N}^3$ is said to be optimal if the sum of the discrete Laplacian and discrete mean curvature vector defined on the two-dimensional manifold $\partial\mathcal{N}^3$ converges to the discrete Laplacian on $\mathcal{N}^3$ in a weak* sense.\footnote{Let $\{f_n\}$ be a sequence of bounded linear functionals on a norm space $X$ and there exists an $f\in X^*$ such that $f_n(x)\longrightarrow f(x)$ for all $x\in X$; namely, $f_n\stackrel{\ast}{\rightharpoonup} f$.}
\end{definition}

The main contributions of this paper are in the following properties:
\begin{itemize}
    \item Based on the geometric fact of hypersurfaces in $\mathbb{R}^3$, the discrete Laplacian on a triangular mesh can define the discrete mean curvature at vertices or on edges (Section~\ref{sec:4}). With respect to the discrete Laplacian on a tetrahedral mesh, as an approach for the continuous Laplacian in $\mathbb{R}^3$, it must contain a term representing the mean curvature of the hypersurface. On the basis of the dual construction of discrete Laplacian, we define its associated discrete Laplacian and reciprocal discrete mean curvature and endow it with some notion of the geometry of surface curvature(Section~\ref{sec:5}).
    \item We give a mathematical proof that the associated discrete Laplacian satisfies the Euler-Lagrange equation for the Dirichlet energy. That is, for a three--simplex immersed in $\mathbb{R}^3$, the associated discrete Laplacian on the tetrahedron can be expressed as the discrete Laplacian of the faces of the tetrahedron and the discrete mean curvature term given by the ambient space (Section~\ref{sec:6}).
    \item For geometric PDEs, reducing the mesh size allows the mesh to better reflect the surface structure, thus minimizing the discretization error. Ending the computation early before solving the PDE accurately can lead to solution errors. In addition to improving the mesh quality, it is important to ensure that the approximate order of the geometries is not less than the approximate order of the mesh. In Section~\ref{sec:7}, for the mean curvature equation, we provide higher--order approximations of the mean curvature $|\vec{H}|$ to minimize the solution error.
\end{itemize}

\section{Preliminary and overview}
Let $\left(\mathcal{M}, \gamma\right)$ and $\left(\mathcal{N}, g\right)$ be two connected Riemannian manifolds of dimension $n$ and $n+d$  with metric tensors $\left(\gamma_{\alpha\beta}\right)$ and $\left(g_{ij}\right)$ in the local charts $x=\left(x_1, \dots, x_n\right)$ and $f=\left(f_1, \dots, f_{n+d}\right)$ on $\mathcal{M}$ and $\mathcal{N}$, respectively.

\subsection{Euler-Lagrange equation for the Dirichlet energy}
Let $f$ be a map in $C^{\infty}(\mathcal{M}, \mathcal{N})$. In practice, the manifold $\mathcal{N}$ can be chosen as an isometric embedding of $\mathbb{R}^{n+d}$ with $g\equiv\delta$ being the Euclidean metric, then the Dirichlet (String) energy of the map $ f=\left(f^1, \cdots, f^n, \cdots, f^{n+d}\right)$ is defined as the integral of the square of $df$ as follows,
\begin{align}\label{Diri}
E_D(f)
\equiv&\frac{1}{2}\int_{\mathcal{M}}|d f|^2 d\mathrm{vol}_{\mathcal{M}}=\frac{1}{2}\int_{\mathcal{M}}\gamma^{\alpha\beta}(x)\delta_{ij}(f)\frac{\partial f^i}{\partial x^{\alpha}}\frac{\partial f^j}{\partial x^{\beta}}\sqrt{\gamma}dx.
\end{align}
where $\gamma=\det\left(\gamma_{\alpha\beta}\right)$ and $d\mathrm{vol}_{\mathcal{M}}=\sqrt{\gamma}dx$ is the volume element on $\mathcal{M}$.

Given any point $x\in\mathcal{M}$, we can find an open ball  $\mathbb{B}_r(x)\subset\mathcal{M}$ such that $\mathbb{B}_r(x)$ is mapped into a coordinate chart $U$ of $\mathcal{N}$ by a smooth map $f$. In these coordinate systems, $f$ can be viewed as a map $f:\mathbb{B}_r(x)\to U\subset\mathbb{R}^n$ and the maps $f+t\phi$ for $\phi\in C^{\infty}_{o}\left(\mathbb{B}_r(x), U\right)$ and $|t|$ sufficiently small determine the variation of the Dirichlet energy in \eqref{Diri}. By the calculus of variations \cite[Chap 8]{jost2011}, we know that the Dirichlet energy in \eqref{Diri} achieves a critical value if and only if a critical point $f$ satisfies the following Euler--Lagrange equation componentwisely.
\begin{align}
\label{eq:EL}
\Delta_{\mathcal{M}} f^m+g^{ij}\Gamma_{st}^m(f(x))\frac{\partial f^s}{\partial x^{i}}\frac{\partial f^t}{\partial x^{j}}=0,\quad m=1,\cdots n+d.
\end{align}
where 
\begin{align*}
\Delta_{\mathcal{M}}f^m:=\frac{1}{\sqrt{\gamma}}\frac{\partial}{\partial x^{\alpha}}\left(\sqrt{\gamma}\gamma^{\alpha\beta}\frac{\partial}{\partial x^{\beta}}f^m\right),\quad\Gamma_{st}^m:=\frac{1}{2}g^{ml}\left(g_{sl, t}+g_{tl, s}-g_{st, l}\right)
\end{align*}
is the Laplace-Beltrami operator on $\mathcal{M}$ and the Christoffel symbos in $\mathcal{N}$, respectively. Moreover, $f\in C^2\left(\mathcal{M}, \mathcal{N}\right)$ are called harmonic map, if it is the solutions of nonlinear partial differenial equations \eqref{Diri} with the Laplace-Beltrami operator as the principal term and the gradient of the solution as a nonlinear quadratic term.

Another idea is to apply the projection $\pi:\mathbb{R}^{n+d}\to\mathcal{N}$ to derive the equivalent form of \eqref{eq:EL}, which maps the points in $\mathbb{R}^{n+d}$ to the nearest points in $\mathcal{N}$.  Similarily, we practically assume $\mathcal{N}$ to be isometrically embedded in $\mathbb{R}^{n+d}$ and consider the local variation of the Dirichlet energy around the point $x\in\mathcal{M}$ with respect to the map $f:\mathcal{M}\to\mathcal{N}$. For any $x\in\mathbb{B}_r(x)\subset\mathcal{M}$ and $\phi\in C^{\infty}_o\left(\mathcal{M},\mathbb{R}^{n+d}\right)$, we can split the variational vector field $\phi$ into the tangential and normal parts with respect to $T_{y}\mathcal{N}$, $y=f(x)$ as follows.\\
Let $\left\{\mu_{n+1}, \dots, \mu_{n+d}\right\}$ be a basis spanning $\left(T_{y}\mathcal{N}\right)^{\perp}$,
\begin{equation*}
\begin{split}
\phi^{\perp}(x):=&\sum_{m=1}^d\phi^{n+m}\mu_{n+m}\in  \left(T_{y}\mathcal{N}\right)^{\perp},\ \phi^{n+m}(x):=\phi(x)\cdot\mu_{n+m}\\
\phi^{\top}(x):=&\phi(x)-\phi^{\perp}(x)\in T_{y}\mathcal{N}.
\end{split}
\end{equation*}
The variation of $E_D(f)$ at point x is given by,
\begin{align}
\delta E_D(f)
:=&\frac{d}{dt}\left(\frac{1}{2}\int_{\mathcal{M}}\gamma^{\alpha\beta}\nabla_{\frac{\partial}{\partial x^{\alpha}}}\pi\left(f+t\phi\right)\nabla_{\frac{\partial}{\partial x^{\beta}}}\pi\left(f+t\phi\right)\sqrt{\gamma}dx\right)\bigg\rvert_{t=0}\nonumber\\
=&\int_{\mathcal{M}}\gamma^{\alpha\beta}\left[\nabla_{\alpha}f\right]\left[\nabla_{\beta}\phi\right]\sqrt{\gamma}dx\label{2ndform}\\
=&\int_{\mathcal{M}}\left(-\Delta^{\top}_{\mathcal{M}}f\right)\phi\sqrt{\gamma}dx+\sum_{m=1}^d\int_{\mathcal{M}}\gamma^{\alpha\beta}l_{\mu_{n+m}}\left(\nabla_{\alpha}f, \nabla_{\beta}f\right)\phi^{n+m}\sqrt{\gamma}dx\label{second1}\\
=&\int_{\mathcal{M}}\left(-\Delta^{\top}_{\mathcal{M}}f\right)\phi\sqrt{\gamma}dx+\int_{\mathcal{M}}\gamma^{\alpha\beta}l\left(\nabla_{\alpha}f, \nabla_{\beta}f\right)\phi\sqrt{\gamma}dx\label{second2}
\end{align}
By the definition of projection $\pi$, we have
$
\pi\circ f(x)=f(x)$, $d\pi(y)X=X
$ 
for all $x\in\mathcal{M}$ and $X\in T_y\mathcal{N}$ with $y=f(x)$. Thus, the equality \eqref{2ndform} is first obtained with $d\pi(y)\nabla_{\beta}\phi=\left[\nabla_{\beta}\phi\right]^{\top}$ and then the tangent symbol can be dropped because $\nabla_{\alpha}f(x)\in T_y\mathcal{N}$ i.e., $\nabla_{\alpha}f(x)\left(\nabla_{\beta}\phi\right)^{\top}=\nabla_{\alpha}f(x)\nabla_{\beta}\phi$. Next, the second integral term of \eqref{second1} is derived from the concept of the second fundamental form\footnote{For $X, Y\in T_x\mathcal{M}$ and $\mu\in T_x\mathcal{M}^{\perp}$, the second fundamental form is the map $l: T_x\mathcal{M}\times T_x\mathcal{M}\rightarrow T_x\mathcal{M}^{\perp}$
\begin{align*}
l\left(X, Y\right)=l_{\mu}\left(X, Y\right)\mu,\quad  l_{\mu}\left(X, Y\right):=\left\langle(\nabla_{X}\mu)^{\top}, Y\right\rangle,
\end{align*}
where $S_{\mu}(X):=(\nabla_{X}\mu)^{\top}$ is also called the shape operator}, which describes the shape of $\mathcal{M}$ embedding into $\mathcal{N}$. 

Based on the variational argument in \eqref{second2}, the Dirichlet energy in \eqref{Diri} achieves a critical value if and only if a critical point $f$ satisfies the Euler--Lagrange equation componentwisely
\begin{align}\label{DiriEL}
\Delta_{\mathcal{M}} f^m-\gamma^{\alpha\beta}l^m\left(\nabla_{\alpha} f,\nabla_{\beta} f \right)=0,\quad m=1, \cdots, n+d.
\end{align}

\begin{remark}
From the above discussion of the Euler-Lagrange equations, the first idea to derive the Euler-Lagrange equation \eqref{eq:EL} is based on the local coordinate systems $\mathbb{B}_r(x)$ of $\mathcal{M}$ and $f(\mathbb{B}_r)\subset U$ of $\mathcal{N}$. The quadratic nonlinearity in the gradient, i.e., $g^{ij}\Gamma_{st}^m(f(x))\frac{\partial f^s}{\partial x^{i}}\frac{\partial f^t}{\partial x^{j}} $ in \eqref{eq:EL}, reflects the nonlinear local geometry between $\mathcal{M}$ and its image $\mathcal{N}$. An intuitive way to study local geometry is to control the Christoffel symbols $\Gamma^m_{st}$, but they only describe how the basis varies in a given coordinate system. Besides, the Christoffel symbols cannot be controlled by some geometric quantities until the solutions $f$ of \eqref{eq:EL} are determined. In other words, $g^{ij}\Gamma_{st}^m(f(x))\frac{\partial f^s}{\partial x^{i}}\frac{\partial f^t}{\partial x^{j}}$  after we determine the solution $f$ and construct the local coordinate in the image $\mathcal{N}$ which makes the study of solutions to \eqref{eq:EL} more difficult. 
The second idea in deriving \eqref{DiriEL} is based on the projections $\pi$ from $\mathbb{R}^{n+d}$ to the tangent space of $\mathcal{N}:=f(\mathcal{M})$, where the coordinate system is vague. According to the derivation of \eqref{DiriEL}, we can see the nonlinearity, i.e., $\gamma^{\alpha\beta}l^m\left(\nabla_{\alpha} f,\nabla_{\beta} f \right)$, which is the global geometry of $\mathcal{N}$ induced by the ambient space $\mathbb{R}^{n+d}$.

In applications, the domain and image are often treated as the known conditions and we seek the mapping is to satisfy some geometric properties  from the domain to image. Therefore it seems reasonable and feasible to use the projective viewpoint to determine the Euler-Lagrange equations.
\end{remark}

\subsection{Discrete approximation to the Dirichlet energy}
Consider a map $f\in C^{\infty}_o\left(\mathcal{M}, \mathbb{R}^{n+d}\right)$, which vanishes on the boundary $\partial \mathcal{M}$, the Dirichlet energy in \eqref{Diri} can be re-expressed in terms of
\begin{align*}
\frac{1}{2}\int_{\mathcal{M}}|d f|^2 d\mathrm{vol}_{\mathcal{M}}=\left(-\Delta f, f\right)_{L^2},
\end{align*}
where $(\cdot)_{L^2}$ is the inner product of the induced $L^2$-norm. 
Based on the notion of an approximation to the Dirichlet energy
\begin{align}\label{approach}
\left(-\Delta f, f\right)_{L^2} \approx -\mathbf{f}^{\intercal}\mathbf{L}\mathbf{f},\quad\mbox{$\mathbf{f}^{\intercal}$ is the transpose of $\mathbf{f}$},
\end{align}
we can define a discrete Laplacian operator that acts linearly on the vertex function $\mathbf{f}$ of the mesh.

Let $\mathbf{f}\in\mathbb{R}^n$ be a function defined on the $n$ vertices of the mesh $\mathcal{M}$. According to the concept of an approach to the Dirichlet energy in \eqref{approach}, a Laplacian matrix acting on the function $\mathbf{f}$ is defined based on differences along the edges:  
\begin{align}\label{dualL}
(\mathbf{L}\mathbf{f})_i=\sum_{\mathbf{v}_i, \mathbf{v}_j\in\mathcal{M}}\mathit{w}_{ij}(f_i-f_j),
\end{align}
where
\begin{align}\label{dualL1}
\mathbf{L}_{ij}=
\begin{cases}
w_{ij}& ,\text{for $i\neq j$},\\
-\sum_{\mathbf{v}_i, \mathbf{v}_j\in\mathcal{M}}w_{ij}& ,\text{otherwise}.
\end{cases}
\end{align}
where $w_{ij}\neq 0$ only if vertices $\mathbf{v}_i$ and $\mathbf{v}_j$ are connected by an edge in $\mathcal{M}$ and a Laplacian vanishes on a constant function $\mathbf{f}\equiv \mathbf{1}$, i.e., vector of ones in $\mathbb{R}^n$.

\begin{remark}
In \cite{wardetzky2007}, Wardetzky et al. considered triangle meshes and discussed several desirable properties such as symmetry, locality, linear precision, the maximum principle, positive semidefiniteness, and the convergence of Dirichlet problem. In particular, they proved an important theoretical restriction that the discrete Laplacian cannot satisfy all natural properties, and then demonstrated that the discrete Laplacian defined in \eqref{dualL} and \eqref{dualL1} can have some of the natural properties of the smooth Laplacian if certain types of meshes are considered. This limitation provides a categorization of the existing literature and explains the large number of existing discrete Laplacians. Since not all desired properties can be satisfied at the same time, the design of a discrete Laplacian depends on the application at hand in order to satisfy the particular needs of a specific problem.
\end{remark}

\subsubsection{Primal topology and dual topology }
Given a triangular or tetrahedral mesh $\mathcal{M}$, we denote the vertices and their positions in the local frame of reference by $\mathbf{v}_i$ (0-simplex). Furthermore, the positions of the vertices in $\mathcal{M}$ are given in the form of vectors $\mathbf{V}$ in $\mathbb{R}^{n\times d}$, so the columns are $n-$vectors consisting of the components of all vertices and the rows are the positions of the vertices in $\mathbb{R}^d$. Taking $d=3$, i.e., $\mathbf{v}_m=[v_m^1, v_m^2, v_m^3]\in\mathbb{R}^{1\times 3}$ for example, 
\begin{align*}
\mathbf{V}=
\begin{bmatrix}
\mathbf{v}_1\\
\mathbf{v}_2\\
\vdots\\
\mathbf{v}_n
\end{bmatrix}=
\begin{bmatrix}
v_1^1&v_1^2&v_1^3\\
v_2^1&v_2^2&v_2^3\\
\vdots&\vdots&\vdots\\
v_n^1&v_n^2&v_n^3\\
\end{bmatrix}
\end{align*}
Two vertice and a straight line define an edge $[\mathbf{v}_i, \mathbf{v}_j]:=\mathbf{e}$ (1-simplex), which consists of the distance and direction from $\mathbf{v}_i$ to $\mathbf{v}_j$. Multiple edges construct a surface which reduces to the union of triangles $[\mathbf{v}_i, \mathbf{v}_j, \mathbf{v}_k]:= \mathrm{T}$ (2-simplex). Assembling multiple surfaces, we can construct volumes and expressed as the union of  tetrahedra by $[\mathbf{v}_i, \mathbf{v}_j, \mathbf{v}_k, \mathbf{v}_l]:=\tau$ (3-simplex).For more discussion of n-complex, we refer to \cite[Chap. 6.3]{jing2007fund}. Each of these topologies is described as primal topologies.

In addition to the primal topology that describes the properties of the original space, the dual topology also reflects the properties of the original space in its dual space. In the following, the Hodge dual will be introduced to define the dual topology. Let $\mathbf{e}_1, \cdots, \mathbf{e}_m$ be an orthonormal basis of $\mathbb{R}^m$, the $k-$fold exterior product of $\mathbb{R}^m$, $\Lambda^k\left(\mathbb{R}^m\right)$ is spanned by an orthonormal basis
\begin{align*}
\mathbf{e}_{i_1}\wedge\cdots\wedge \mathbf{e}_{i_k},\quad 1\leq i_1<\cdots<i_k\leq m.
\end{align*}
\begin{definition}\cite[Chap. 3.3]{jost2011}
Suppose $\mathbb{R}^m$ carry an positive orientation i.e., 
\begin{align*}
\det\left(\left[\mathbf{e}_1\cdots\mathbf{e}_m\right]\right)=1,\quad \mathbf{e}_k=[0, \cdots, \underset{\substack{\uparrow\\\mathclap{k-th}}}{1}, \cdots, 0]^{\intercal}.
\end{align*}
The Hodge star (dual) operator is defined by
\begin{align}\label{hodge}
*:&\Lambda^k\left(\mathbb{R}^m\right)\longrightarrow\Lambda^{m-k}\left(\mathbb{R}^{m}\right),\quad 0\leq k\leq m,\nonumber\\
&*\left(\mathbf{e}_{i_1}\wedge\cdots\wedge \mathbf{e}_{i_k}\right)=\pm\mathbf{e}_{j_1}\wedge\cdots\wedge \mathbf{e}_{j_{m-k}},
\end{align}
where $\left[\mathbf{e}_{i_1}, \cdots, \mathbf{e}_{i_k}, \mathbf{e}_{j_1}, \cdots, \mathbf{e}_{j_{m-k}}\right]$ has the positive (negative) orientation. In particular,
$
*\left(\mathbf{e}_{1}\wedge\cdots\wedge \mathbf{e}_{m}\right)=1$ and $*\left(1\right)=\mathbf{e}_{1}\wedge\cdots\wedge \mathbf{e}_{m}.
$
\end{definition}

\begin{example}
Suppose $\mathbb{R}^5$ carry the standard positive orientation $\left[\mathbf{e}_1, \mathbf{e}_2, \mathbf{e}_3, \mathbf{e}_4, \mathbf{e}_5\right]$, the Hodge operator acting on $\mathbf{e}_3$, $\mathbf{e}_1\wedge\mathbf{e}_3$ and $\mathbf{e}_1\wedge\mathbf{e}_2\wedge\mathbf{e}_4$ are given by
\begin{equation*}
*\left(\mathbf{e}_3\right)=\mathbf{e}_1\wedge\mathbf{e}_2\wedge\mathbf{e}_4\wedge\mathbf{e}_5,\quad *\left(\mathbf{e}_1\wedge\mathbf{e}_3\right)=-\mathbf{e}_2\wedge\mathbf{e}_4\wedge\mathbf{e}_5,\quad*\left(\mathbf{e}_1\wedge\mathbf{e}_2\wedge\mathbf{e}_4\right)=-\mathbf{e}_3\wedge\mathbf{e}_5.
\end{equation*}
\end{example}

The geometric interpretation of the Hodge star operation in \eqref{hodge} can be obtained from the subspace $U: =\mathrm{span}\{\mathbf{e}_{i_1}, \cdots, \mathbf{e}_{i_k}\}$ of $\mathbb{R}^m$ and its orthogonal subspace $U^{\perp}$. This can be achieved by recognizing the fundamental matrix $\left[\mathbf{e}_{i_1}, \cdots,\mathbf{e}_{i _k}\right]$ as $\mathbf{e}_{i_1}\wedge\cdots\wedge\mathbf{e }_{i _k}$ by the Pl\"{u}cker embedding. Based on the geometric interpretation, the dual topology can be applied to discrete meshes. Taking the tetrahedral mesh $\mathcal{M}$ as an example, $*[\mathbf{v}_i, \mathbf{v}_j]$ is the face dual to the edge and $*[\mathbf{v}_i, \mathbf{v}_j, \mathbf{v}_k, \mathbf{v}_l]$ is the circumcenter (vertex) dual to the tetrahedron $[\mathbf{v}_i, \mathbf{v}_j, \mathbf{v}_k, \mathbf{v}_l]$. 
The relative measures of an element and its dual are denoted by $|\cdot|$. For elements in the mesh, $|[\mathbf{v}_i, \mathbf{v}_j]|$ is the length of the edge, $|[\mathbf{v}_i, \mathbf{v}_j, \mathbf{v}_k]|$ is the area of the triangle and $|[\mathbf{v}_i, \mathbf{v}_j, \mathbf{v}_k, \mathbf{v}_l]|$ is the volume of the tetrahedron. Similarly, the measures of dual elements are the same, e.g., $|*[\mathbf{v}_i, \mathbf{v}_j]|$ is the length of edge dual to $[\mathbf{v}_i, \mathbf{v}_j]$ in a triangle mesh, resp. the area of face dual to $[\mathbf{v}_i, \mathbf{v}_j]$ in a tetrahedral mesh.

\begin{figure}
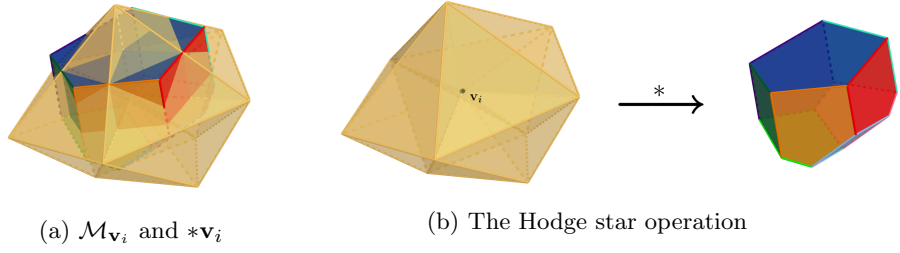

\begin{subfigure}[h]{0.4\textwidth}
\centering
   \includegraphics[width=.65\linewidth]{figures/Pri_Dual_cell.pdf}
   \caption{$\mathcal{M}_{\mathbf{v}_i}$ and $*\mathbf{v}_i$}
   \label{fig:pri_dual}
\end{subfigure}
\begin{subfigure}[h]{0.5\textwidth}
\centering
   \includegraphics[width=1.15\linewidth]{figures/Pri_Hodge_Dual.pdf}
   \caption{The Hodge star operation}
   \label{fig:pri_hodge_dual}
\end{subfigure}
\caption{(a) The one-ring neighborhood $\mathcal{M}_{\mathbf{v}_i}$ and dual cell $*\mathbf{v}_i$ of the vertex $\mathbf{v}_i$. (b) The Hodge star $*$ maps the vertex $\mathbf{v}_i$ to its dual $*\mathbf{v}_i$}
\end{figure}

\section{Primal and dual construction of the discrete Laplacian}
In this section, we review the primal and dual construction of the discrete Laplacian of triangle and tetrahedral meshes.
For each construction, we explain only the form of the weights $w_{ij}$ in \eqref{dualL1} and the notation therein; 
the detailed derivations are not discussed here, but we briefly list the references for the reader's information.

\subsection{Primal discrete Laplacian}
There are a number of ways to construct the primal discrete Laplacian, for example, by constructing piecewise linear basis functions on a triangular or tetrahedral mesh in the finite element method \cite{dziuk1988}, by directly calculating the Dirichlet energy of the piecewise linear function \cite{pinkall1993,gu2003volumetric} or consider immersing the mesh into higher dimensions and then using the fact that  the Laplacian is proportional to the gradient of the area functional to define the vertex action \cite{crane2019n}. Either way, the result of the weight $w_{ij}$ in \eqref{dualL1} is the sum of the contributions of all incident triangles or tetrahedra:
\begin{itemize}
\item Triangle mesh $\mathcal{M}$:\\
The weight to the edge $[\mathbf{v}_i, \mathbf{v}_j]$ is given by
\begin{subequations}\label{triangle}
\begin{align}
w_{ij}=\sum_{[\mathbf{v}_i, \mathbf{v}_j, \mathbf{v}_k]\in\mathcal{M}}w_{ijk}.
\end{align}
The contribution of each incident triangles with common edge $[\mathbf{v}_i, \mathbf{v}_j]$ is
\begin{align}
w_{ijk}:=-\frac{\mathbf{n}_i\cdot\mathbf{n}_j}{4|[\mathbf{v}_i, \mathbf{v}_j, \mathbf{v}_k]|}=\frac{1}{2}\cot\theta_k.
\end{align}
The symbol $\mathbf{n}_i:= \mathbf{s}\times (\mathbf{v}_k-\mathbf{v}_j)$ denotes the conormal vector in the plane of the triangle $[\mathbf{v}_i, \mathbf{v}_j, \mathbf{v}_k]$ orthogonal to the edge $[\mathbf{v}_j, \mathbf{v}_k]$, where $\mathbf{s}$ is the unit surface normal vector of the triangle $[\mathbf{v}_i, \mathbf{v}_j, \mathbf{v}_k]$. The angle at the vertex $\mathbf{v}_k$ is denoted by $\theta_k$.
\end{subequations}

\begin{figure}[t]
  \centering
  \includegraphics[width=.6\linewidth]{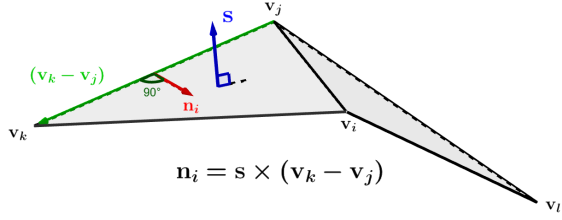}
  %
  \caption{\label{fig:Cotangent}
           The unit surface normal vector $\mathbf{s}$ and the conormal vector $\mathbf{n}_i$ on the triangle $[\mathbf{v}_i, \mathbf{v}_j, \mathbf{v}_k]$.}
\end{figure}

\item Tetrahedral mesh $\mathcal{M}$:\\
The weight to the edge $[\mathbf{v}_i, \mathbf{v}_j]$ is given by
\begin{subequations}
\begin{align}\label{primalL2}
w_{ij}=\sum_{[\mathbf{v}_i, \mathbf{v}_j, \mathbf{v}_k, \mathbf{v}_l]\in\mathcal{M}}w_{ijkl}.
\end{align}
The contribution of each incident triangles with common edge $[\mathbf{v}_i, \mathbf{v}_j]$ is
\begin{align}\label{primalL3}
w_{ijkl}:=-\frac{\mathbf{s}_i\cdot\mathbf{s}_j}{18|[\mathbf{v}_i, \mathbf{v}_j, \mathbf{v}_k, \mathbf{v}_l]|}=\frac{1}{6}|[\mathbf{v}_k, \mathbf{v}_l]|\cot\theta_{kl}^{ij}.
\end{align}
See the description in Figure~\ref{fig:Cotangent}(a). The surface normal vector $\mathbf{s}_i$ of the triangle $[\mathbf{v}_j, \mathbf{v}_k, \mathbf{v}_l]$ is chosen in the outer direction. The angle $\theta_{kl}^{ij}$ is the dihedral angle of the edge $[\mathbf{v}_k, \mathbf{v}_l]$ opposite the edge $[\mathbf{v}_i, \mathbf{v}_j]$.
  %
  %
\end{subequations}
\end{itemize}

\begin{figure}
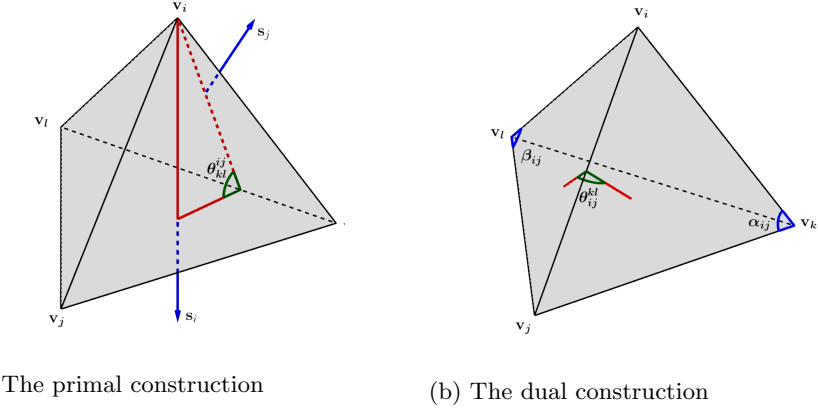

\begin{subfigure}[h]{0.45\textwidth}
   \includegraphics[width=1.3\linewidth]{figures/Cotangent_weight_tetra0.pdf}
   \caption{The primal construction}
\end{subfigure}
\begin{subfigure}[h]{0.45\textwidth}
   \includegraphics[width=1.3\linewidth]{figures/Cotangent_weight_tetra1.pdf}
   \caption{The dual construction}
\end{subfigure}
\caption{\label{fig:Cotangent}(a) The dihedral angle $\theta_{kl}^{ij}$ in the primal construction of the weight $w_{ijkl}$ to the edge $[\mathbf{v}_i, \mathbf{v}_j]$. (b) The dihedral angle $\theta_{ij}^{kl}$ and triangular angles $\alpha_{ij}$, $\beta_{ij}$  in the dual construction of the weight $w_{ijkl}$ to the edge $[\mathbf{v}_i, \mathbf{v}_j]$.}
\end{figure}

\subsection{Dual discrete Laplacian}
According to the notion of approximation to the Dirichlet energy in \eqref{approach}, the discrete Laplacian $(\mathbf{L}\mathbf{f})_i$ represents the integrated Laplacian; that is, it defines the Laplacian operator of a vertex $\mathbf{v}_i$ in the integral (average) sense. The correspondence of the Dirichlet energy and the Laplacian operator in the continuous and discrete cases can be found in the table preceding Section 2.2. of \cite{wardetzky2007discrete}. Thus, different approximations to the domain around a vertex (one-ring neighbourhood) yield different constructions of the discrete Laplacian. In contrast to the direct modelling of the primal construction to Laplacian, the dual approach uses inaccurate discrete data from the Dirichlet energy minimisation process as an approximation to the Laplcian. 

To correlate the dual approach with the integral (Dirichlet energy), the classical Stokes' theorem is a good starting point. More precisely, the generalized Stokes' theorem relates the integral of a differential $(n-1)$-form $\omega$ on the boundary $\partial\mathcal{M}$ to the integral of a $n$-form $d\omega$, which is the exterior derivative of $\omega$, on a $n$-dimensional manifold $\mathcal{M}$. For the thorough introduction to the discrete exterior calculus (DEC), the works by Hirani \cite{hirani2003discrete} and Desbrun et al. \cite{desbrun2005discrete} present the history and development in the discretization of differential operators through the exterior calculus. In \cite{hirani2003discrete,desbrun2006discrete}, the DEC is to assign the region associated with vertex to be its dual cell and assume the dual graph is orthogonal and immersed  in order to be consistent with the Hodge star operator.

For triangle meshes, Pinkall and Polthier \cite{pinkall1993} considered the conjugation i.e., the Hodge star (dual) operation, on discrete surface and present the conjugation algorithm for computing the conjugate minimal surface. Most importantly, they proved that the minimality condition for the discrete Dirichlet energy of initial surfaces is equivalent to the closedness condition for the dual one-form of conjugate surfaces in \cite[Theorem 11.]{pinkall1993}. In other words, the primal and dual Laplacians are equivalent in the planar case.

For tetrahedral meshes, Alexa et al. \cite{alexa2020properties} applied Stokes theorem to the integrated Laplacian by designating the region associated with the vertex $\mathbf{v}_i$ as its dual unit $*\mathbf{v}_i$ under the assumption that the dual graph is orthogonal and immersed and thus deriving the dual discrete Laplacian, whose edge weights are different from that of the primal discrete Laplacian. Let $f$ be the piecewise linear approximation to the exact solution, the derivation of $(\mathbf{L}\mathbf{f})_i$ is as follows,

\begin{align*}
(\mathbf{L}\mathbf{f})_i
=&\int_{*\mathbf{v}_i}\Delta f=\int_{\partial*\mathbf{v}_i}\mathbf{n}^{\top}\nabla f\\
=&\sum_{[\mathbf{v}_i, \mathbf{v}_j]\in\mathcal{M}}\int_{*[\mathbf{v}_i, \mathbf{v}_j]}\frac{\mathbf{v}_j-\mathbf{v}_i}{|[\mathbf{v}_i, \mathbf{v}_j]|}\nabla f,\quad ((\mathbf{v}_j-\mathbf{v}_i)\nabla f=(f_j-f_i))\\
=&\sum_{[\mathbf{v}_i, \mathbf{v}_j]\in\mathcal{M}}\frac{|*[\mathbf{v}_i, \mathbf{v}_j]|}{|[\mathbf{v}_i, \mathbf{v}_j]|}(f_j-f_i)\\
:=&\sum_{[\mathbf{v}_i, \mathbf{v}_j]\in\mathcal{M}}w_{ij}(f_j-f_i).
\end{align*}
The per-tetrahedron accumulation formula of $w_{ij}$ is defined by
\begin{subequations}\label{dualL2_3}
\begin{align}\label{dualL2}
w_{ij}=\sum_{[\mathbf{v}_i, \mathbf{v}_j, \mathbf{v}_k, \mathbf{v}_l]}w_{ijkl}+w_{ijlk}
\end{align}
where $w_{ijkl}$ can be explicitly formulated by the dihedral angle $\theta_{ij}^{kl}$ on the edge $[\mathbf{v}_i, \mathbf{v}_j]$ and the triangular interior angles $\alpha_{ij}$ and $\beta_{ij}$ incident on the opposite edge $[\mathbf{v}_i, \mathbf{v}_j]$ see Figure~\ref{fig:Cotangent} (b),
\begin{align}\label{dualL3}
w_{ijkl}=\frac{|[\mathbf{v}_i, \mathbf{v}_j]|}{8}\cot\theta_{ij}^{kl}\left(\frac{2\cot\alpha_{ij}\cot\beta_{ij}}{\cos\theta_{ij}^{kl}}-(\cot^2\alpha_{ij}+\cot^2\beta_{ij})\right).
\end{align}
\end{subequations}

\begin{remark}
We revisit these two constructions of Laplacian on the tetrahedral mesh $\mathcal{M}$. Although each tetrahedron $\tau\in\mathcal{M}$ is a three-dimensional object, the vertex function $\mathbf{f}$ acts only on the vertices of $\tau$, while the action on the interior of $\tau$ is entirely governed by the action on the vertices. Thus, the Dirichlet energy of a smooth function $f$ on $\tau$ should be considered as the problem of embedding from a two-dimensional surface, say $\partial \tau$, into a three-dimensional space $\mathbb{R}^3$. 
\end{remark}
Thus $\mathcal{M}$ is still represented as a tetrahedral mesh and we consider it to be the two-dimensional interface that divides a compact 3-manifold into multiple tetrahedral subregions unless it is otherwise stated,
$
\mathcal{M}\equiv\bigcup_{\tau\in \mathcal{M}}\partial \tau.
$
  %
  %

\section{Discrete Construction of Mean curvature}
In differential geometry, given an orientable surface $\mathcal{M}$ smoothly immersed in $\mathbb{R}^3$, we can study its local shape  by means of the shape operator (Weingarten map), assigning a unit normal vector at the point $x\in\mathcal{M}$.
Furthermore, the shape operator is the second-order invariant that completely determines the surface $\mathcal{M}$ in $\mathbb{R}^3$. In particular, the Laplace operator defined on $\mathcal{M}$ and the quantity defined by the shape operator are related as follows,
\begin{align}\label{second}
\Delta_{\mathcal{M}} f=\vec{H}
\end{align}
where $f$ is the coordinate map embedding $\mathcal{M}$ into $\mathbb{R}^{3}$. A more complete argument for the connection between \eqref{eq:EL} and \eqref{DiriEL} and the derivation of \eqref{second} can be found in \cite[Chap. 8]{jost2011} and \cite[Appendix A]{ecker2012regularity}.

Consider a triangle mesh $\mathcal{M}\hookrightarrow\mathbb{R}^3$ and the discrete Laplacian defined by \eqref{dualL}, \eqref{dualL1} and \eqref{triangle}, it is reasonable to construct the discrete mean curvature on $\mathcal{M}$ and is expected to satisfy the discrete version of \eqref{second}.

\subsection{Discrete mean curvature on vertex}\label{sec:4}
 Suppose that a smooth surface moves in the direction of the surface normal, the mean curvature vector can represent a measure of the variation in surface area relative to nearby surfaces. Let $\mathrm{T}$ be a triangle in the triangle mesh $\mathcal{M}$ with vertices $\mathbf{v}_i$ and vertex angles $\alpha_i$, $\mathrm{Area}(\mathrm{T})$
can be expressed as
\begin{align}\label{areaform}
\mathrm{Area}(\mathrm{T})=\frac{1}{4}\sum_{i=1}^3\cot\alpha_i|\mathbf{v}_{i+1}-\mathbf{v}_{i-1}|^2
\end{align}
For detailed discussions and derivations of curvatures, harmonic maps, minimal surfaces, etc. in the discrete case, the readers can consult \cite{pinkall1993, polthier2002, gu2008} or reference therein.

\begin{lemma}
Let $\mathbf{v}$ be an interior vertex of $\mathcal{M}$ and $\mathcal{M}_\mathbf{v}$ be the triangles forming a one--ring neighborhood of $\mathbf{v}$. Then the gradient of $\mathrm{Area}(\mathcal{M}_{\mathbf{v}})$ with respect to variation of vertices can be written in the cotangent formula
\begin{align}\label{gradarea}
\nabla\mathrm{Area}(\mathcal{M}_{\mathbf{v}})=\frac{1}{2}\sum_{\mathbf{v}_i \in \mathcal{M}_{\mathbf{v}}
}\left(\cot\alpha_i+\cot\beta_i\right)(\mathbf{v}-\mathbf{v}_i),
\end{align}
where $\alpha_i$ and $\beta_i$ are the opposite angles relative to the edge $[\mathbf{v}_i, \mathbf{v}]$.
\end{lemma}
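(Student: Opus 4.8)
The plan is to reduce the global statement to a per-triangle gradient computation and then assemble the contributions edge by edge around the fan $\mathcal{M}_{\mathbf{v}}$. First I would fix a single triangle $\mathrm{T}=[\mathbf{v},\mathbf{a},\mathbf{b}]$ incident to $\mathbf{v}$, write its area as $\tfrac12\cdot\mathrm{base}\cdot\mathrm{height}=\tfrac12|\mathbf{b}-\mathbf{a}|\,h$ with $h$ the distance from $\mathbf{v}$ to the line through $\mathbf{a}$ and $\mathbf{b}$, and observe that moving $\mathbf{v}$ changes the area only through $h$. Hence $\nabla_{\mathbf{v}}\mathrm{Area}(\mathrm{T})=\tfrac12|\mathbf{b}-\mathbf{a}|\,\hat{\mathbf{n}}$, where $\hat{\mathbf{n}}$ is the in-plane unit vector orthogonal to $[\mathbf{a},\mathbf{b}]$ pointing from the edge toward $\mathbf{v}$.

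Next I would rewrite this gradient in terms of the two spoke vectors $\mathbf{v}-\mathbf{a}$ and $\mathbf{v}-\mathbf{b}$. Dropping the perpendicular foot $\mathbf{p}$ from $\mathbf{v}$ onto the line $\mathbf{a}\mathbf{b}$ and decomposing $\mathbf{v}-\mathbf{a}$ and $\mathbf{v}-\mathbf{b}$ into their components along the unit edge direction $\hat{\mathbf{e}}$ and along $\hat{\mathbf{n}}$, the elementary right-triangle relations $|\mathbf{p}-\mathbf{a}|=h\cot\alpha$ and $|\mathbf{p}-\mathbf{b}|=h\cot\beta$ (with $\alpha,\beta$ the angles at $\mathbf{a},\mathbf{b}$) yield, after the component along $\hat{\mathbf{e}}$ cancels,
\begin{align*}
\cot\beta\,(\mathbf{v}-\mathbf{a})+\cot\alpha\,(\mathbf{v}-\mathbf{b})=h(\cot\alpha+\cot\beta)\,\hat{\mathbf{n}}=|\mathbf{b}-\mathbf{a}|\,\hat{\mathbf{n}},
\end{align*}
where the last equality uses $|\mathbf{b}-\mathbf{a}|=h(\cot\alpha+\cot\beta)$. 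Therefore $\nabla_{\mathbf{v}}\mathrm{Area}(\mathrm{T})=\tfrac12\big(\cot\beta\,(\mathbf{v}-\mathbf{a})+\cot\alpha\,(\mathbf{v}-\mathbf{b})\big)$; equivalently this same per-triangle gradient can be read off from \eqref{areaform} by differentiating the single-triangle area in the squared spoke lengths.

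Finally I would sum over the fan. Each spoke edge $[\mathbf{v},\mathbf{v}_i]$ belongs to exactly two triangles of $\mathcal{M}_{\mathbf{v}}$, namely $[\mathbf{v},\mathbf{v}_{i-1},\mathbf{v}_i]$ and $[\mathbf{v},\mathbf{v}_i,\mathbf{v}_{i+1}]$, and the per-triangle formula contributes to the coefficient of $(\mathbf{v}-\mathbf{v}_i)$ precisely $\tfrac12\cot(\text{angle at }\mathbf{v}_{i-1})$ from the first and $\tfrac12\cot(\text{angle at }\mathbf{v}_{i+1})$ from the second. These are exactly the two angles $\alpha_i,\beta_i$ opposite the edge $[\mathbf{v}_i,\mathbf{v}]$, so collecting terms gives \eqref{gradarea}.

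The main obstacle I anticipate is bookkeeping rather than a deep difficulty: I must fix a consistent orientation so that each $\hat{\mathbf{n}}$ and $\hat{\mathbf{e}}$ sign is coherent across the fan, and I must check that the derivation survives the obtuse case, where the foot $\mathbf{p}$ lies outside the segment $[\mathbf{a},\mathbf{b}]$. In that situation one of $|\mathbf{p}-\mathbf{a}|,|\mathbf{p}-\mathbf{b}|$ must be read as a signed length and the corresponding cotangent is negative, but the identity $|\mathbf{b}-\mathbf{a}|=h(\cot\alpha+\cot\beta)$ and the cancellation of the $\hat{\mathbf{e}}$ component both persist, so the final formula is unchanged.
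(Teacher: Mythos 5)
Your argument is correct: the per-triangle computation $\nabla_{\mathbf{v}}\mathrm{Area}(\mathrm{T})=\tfrac12\bigl(\cot\beta\,(\mathbf{v}-\mathbf{a})+\cot\alpha\,(\mathbf{v}-\mathbf{b})\bigr)$ checks out (including the signed-length reading in the obtuse case), and the assembly over the one-ring correctly attributes to each spoke $[\mathbf{v},\mathbf{v}_i]$ the cotangents of the two opposite angles. The paper itself offers no proof of this lemma --- it states it and defers to the cited references \cite{pinkall1993,polthier2002,gu2008} --- and your derivation is essentially the standard one found there, so there is nothing to reconcile; your side remark that the same result follows by differentiating \eqref{areaform} through the squared spoke lengths (whose partial derivatives are $\tfrac14\cot$ of the opposite angles) is also a valid, slightly slicker route.
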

\begin{remark}\label{rmk:normal}
In the discrete case, the normal vector is not uniquely determined not only on the edges but also at the vertices. According to the calculus of variations on the area functional,  the area varies rapidly in the direction of the surface normal, so we can choose the normal vector as the gradient of area. In particular, for planar regions, the area around each point neither increases nor decreases; that is, there are no bulges or depressions at each point, so the normal vector is the same for each point. Renormalizing the normal vector by its local area, the unit normal vector of each point on the plane region can be defined and parallel to each other.
\end{remark}
\begin{definition}\label{discreteVertex}
The discrete mean curvature at the vertex $\mathbf{v}\in\mathcal{M}_{\mathbf{v}}$ is a vector--valued quantity
\begin{align}\label{discreteH}
\vec{H}_{\mathbf{v}}:=\nabla\mathrm{Area}(\mathcal{M}_{\mathbf{v}})
=\frac{1}{2}\sum_{\mathbf{v}_i \in \mathcal{M}_{\mathbf{v}}
}\left(\cot\alpha_i+\cot\beta_i\right)(\mathbf{v}-\mathbf{v}_i)
=\mathbf{L}_{\mathbf{v}\mathbf{v}_i}(\mathbf{v}-\mathbf{v}_i).
\end{align}
where $\mathbf{L}_{\mathbf{v}\mathbf{v}_i}$ is the discrete cotangent--weight Laplacian. 
\end{definition}

\subsection{Discrete mean curvature on edge}
When processing discrete surfaces, mean curvature occurs not only at vertices but also along edges. Since vertices are intersections of edges, an intuitive idea is to define discrete mean curvature along the edges and the mean curvature of the edges lying on the plane must be zero. The discussions of discrete mean curvature along edges can be found in \cite[pp.85-86]{polthier2002} and \cite[Sec.4.3]{sullivan2008} . We follow the notation and definition of the latter article.

\begin{definition}\cite{sullivan2008}\label{discreteEdge}
Suppose $\mathrm{T}_k:= [\mathbf{v}_i, \mathbf{v}_j, \mathbf{v}_k]$ and $\mathrm{T}_l:= [\mathbf{v}_i, \mathbf{v}_j, \mathbf{v}_l]$ are two non-coplanar triangles with a common edge $\mathbf{e}:= [\mathbf{v}_i, \mathbf{v}_j]$, see \textup{Figure~\ref{fig:DiscreteMean} (a)}, then the discrete mean curvature vector at $\mathbf{e}$ is defined by the total mean curvature over any surface patch $\Omega$ in the star region $\mathrm{Star}(\mathbf{e})=\mathrm{T}_k\cup \mathrm{T}_l$ containing $\mathbf{e}$,
\begin{align}
\label{discreteE1}
-2\vec{H}_{\mathbf{e}}
\equiv&\iint_{\Omega}\vec{H}dA=\iint_{\Omega}H\mu dA=-\oint_{\partial \Omega}\mu\times d\mathbf{x}\nonumber\\
=&\oint_{\partial \Omega}\eta ds=J_k \mathbf{e}-J_l \mathbf{e},\quad \eta:=t\times\mu
\end{align}
where $\mu$ and $t$ is the unit normal and unit tangent vectors on surface patch $\Omega$, respectively, and $d\mathbf{x}=t\ ds$ is the vector line element along $\partial \Omega$. The operator $J_k$ $(J_l)$
\footnote{
If $\mathcal{M}$ is a two-dimensional vector space, then a complex structure on $\mathcal{M}$ can be defined as
$
J:\mathcal{M}\longrightarrow\mathcal{M}$, rotating counterclockwise rotation by $\pi/2$, so $J^2=-\mathrm{Id}_{\mathcal{M}}
$
where $J^2=J\circ J$ and $\mathrm{Id}_{\mathcal{M}}$ is the identity map on $\mathcal{M}$.
}
 is a complex structure in the plane of the triangle $\mathrm{T}_k$ $(\mathrm{T}_l)$ by rotating $90^{\circ}$. Let $\theta_{\mathbf{e}}$ be the dihedral angle along the edge $\mathbf{e}$ enclosed by triangles $\mathrm{T}_h$ and $\mathrm{T}_l$, the mean curvatures are given by
\begin{align}
\label{discreteE2}
|\vec{H}_{\mathbf{e}}|=\frac{|J_l \mathbf{e}-J_k \mathbf{e}|}{2}=\sin((\pi-\theta_{\mathbf{e}})/2)|\mathbf{e}|.
\end{align}
In particular, $\theta_{\mathbf{e}}$ is set to $0$ for describing the mean curvature (normal) vector along the boundary edge $\mathbf{e}$, since $\vec{H}_{\mathbf{e}}=J \mathbf{e}$ and $|\vec{H}_{\mathbf{e}}|=|\mathbf{e}|$ by \eqref{discreteE1}, \eqref{discreteE2} and the condition $\theta_{\mathbf{e}}\equiv 0\Longleftrightarrow J_k=-J_l\equiv J$.
\end{definition}

\begin{remark}\label{rmkVE}
The discrete mean curvature defined in Definition~\ref{discreteVertex} and Definition~\ref{discreteEdge} is equivalent, because the accumulation formula for the vertices is shown below:
\begin{align*}
\vec{H}_{\mathbf{v}}
:=\sum_{\mathbf{v}_i \in \mathcal{M}_{\mathbf{v}}
}-\vec{H}_{[\mathbf{v}, \mathbf{v}_i]}=&\frac{1}{2}\sum_{\mathbf{v}_i \in \mathcal{M}_{\mathbf{v}}} J_i(\mathbf{v}-\mathbf{v}_{i})=\mathbf{L}_{\mathbf{v}\mathbf{v}_i}(\mathbf{v}-\mathbf{v}_i).
\end{align*}
where the derivation of last equality can be found in \cite[pp.24]{pinkall1993} \cite[Lemma 3.6]{wardetzky2008}. Most importantly, the discrete mean curvature vector in \eqref{discreteH} is consistent with the mean curvature vector in the smoothed case in \eqref{second}.
\end{remark}

In the conclusion of this section, we give the corresponding discrete forms of the tangential smooth Laplace--Beltrami operator and the normal smooth Laplace--Beltrami operator for a given triangular mesh $\mathcal{M}$, considered as a hypersurface in $\mathbb{R}^3$. Let $\mathbf{L}=[\mathbf{L}_{ij}]=\left[\frac{1}{2}\left(\cot\widehat{\alpha}_{ij}+\cot\widehat{\beta}_{ij}\right)\right]$ is a symbolic matrix induced by the triangle mesh $\mathcal{M}$.
\begin{itemize}\label{sec:End4}
\item The tangential discrete Laplacian of $\mathcal{M}$:
\begin{align*}
\mathbf{L}^{\top}(\mathcal{M})=[\mathbf{L}^{\top}_{ij}](\mathcal{M})=\left[\frac{1}{2}\left(\cot\widehat{\alpha}_{ij}+\cot\widehat{\beta}_{ij}\right)^{\top}\right](\mathcal{M}):=\left[\frac{1}{2}\left(\cot\alpha_{ij}+\cot\beta_{ij}\right)\right].
\end{align*}
In other words, it is just the standard cotangent--weighted discrete Laplacian of the triangular mesh $\mathcal{M}$ derived in \cite{pinkall1993,polthier2002,gu2008}.
\item The normal discrete Laplacian of $\mathcal{M}$:
\begin{align*}
\mathbf{L}^{\perp}(\mathcal{M})=[\mathbf{L}^{\perp}_{ij}](\mathcal{M})=\left[\frac{1}{2}\left(\cot\widehat{\alpha}_{ij}+\cot\widehat{\beta}_{ij}\right)^{\perp}\right](\mathcal{M}):=\left[\sin\left(\frac{\pi-\theta_{ij}}{2}\right)\right].
\end{align*}
Clearly, $\mathbf{L}^{\perp}(\mathcal{M}_{\mathbf{v}})\equiv 0$ implies that the surface $\mathcal{M}_{\mathbf{v}}$ is not curved at the edges; that is, $\mathcal{M}_{\mathbf{v}}$ is a locally minimal surface, a planar domain. This idea was first implicit in \cite{pinkall1993}, while precise geometric perspectives and arguments were explicitly given in \cite{polthier2002,sullivan2008}.
\end{itemize}

\section{Geometric aspects of the dual construction}\label{sec:5}
In contrast to the Laplacian on triangle meshes, the primal and dual Laplacians on tetrahedral meshes are formulated in different ways. 
 It is important to determine which structure (primal or dual) satisfies the discrete form of \eqref{second} when we want to generalize the argument to higher dimensions, such as $\mathbb{R}^4$, or apply some structures from $\mathbb{R}^4$ to study the objects in $\mathbb{R}^3$.

Since we live in three dimensions, we cannot directly generalize the arguments in \cite{pinkall1993,polthier2002,gu2008} to check the equation \eqref{discreteH}. Even so, we can compare which structure gives the optimal approximation to the Laplacian in $\mathbb{R}^3$ if it involves a classical immersion from $\mathbb{R}^3$ into $\mathbb{R}^4$.
For example, Alexa et al. \cite{alexa2020properties} consider $\mathbb{R}^3\hookrightarrow\mathbb{S}^4$ by identifying $\mathbb{R}^3$ as a ``plane" in $\mathbb{R}^4$ and taking the inverse stereographic projection onto $\mathbb{S}^3$, and then they numerically check which discrete mean curvature approaches $1$ when the mean edge length converges to zero. Since the equation \eqref{second} in the smooth case holds for all hypersurfaces in $\mathbb{R}^4$, so in order to determine the primal or dual Laplacian that characterizes the optimal approximation to the Laplacian in $\mathbb{R}^3$, one must examine \eqref{discreteH} of all embeddings, which is not feasible. Therefore, it still lacks a rigorous proof that the dual Laplacian provides better description of the shape operator under immersion.

\subsection{Theoretical foundation}
\begin{figure}
\begin{subfigure}[h]{0.5\textwidth}
   \includegraphics[width=1.\linewidth]{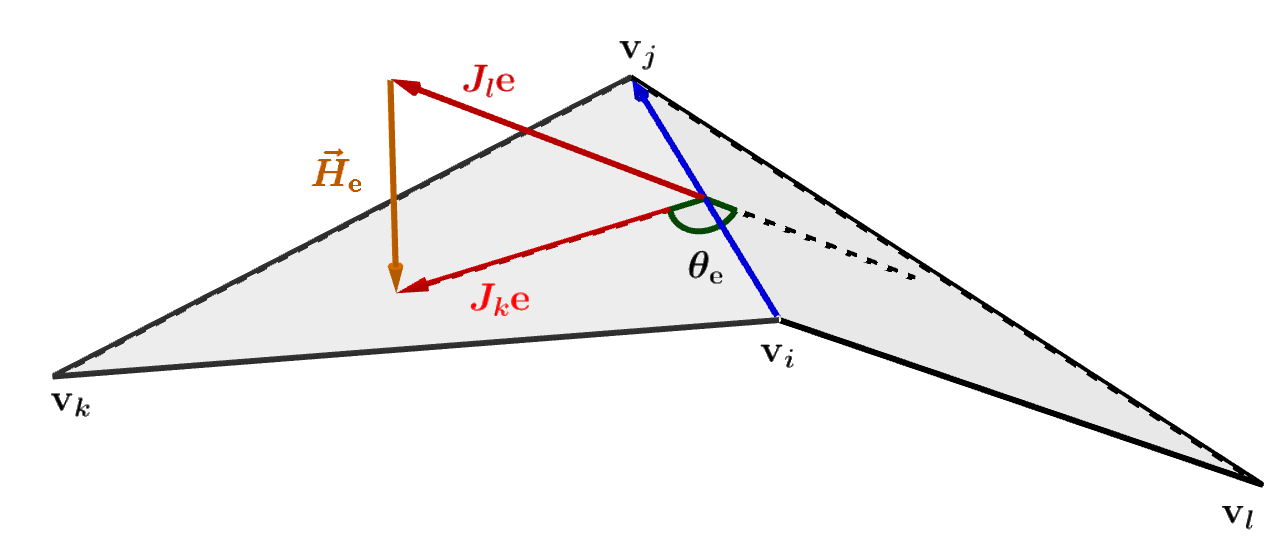}
   \caption{$\vec{H}_{\mathbf{e}}$}
\end{subfigure}
\begin{subfigure}[h]{0.5\textwidth}
   \includegraphics[width=1.\linewidth]{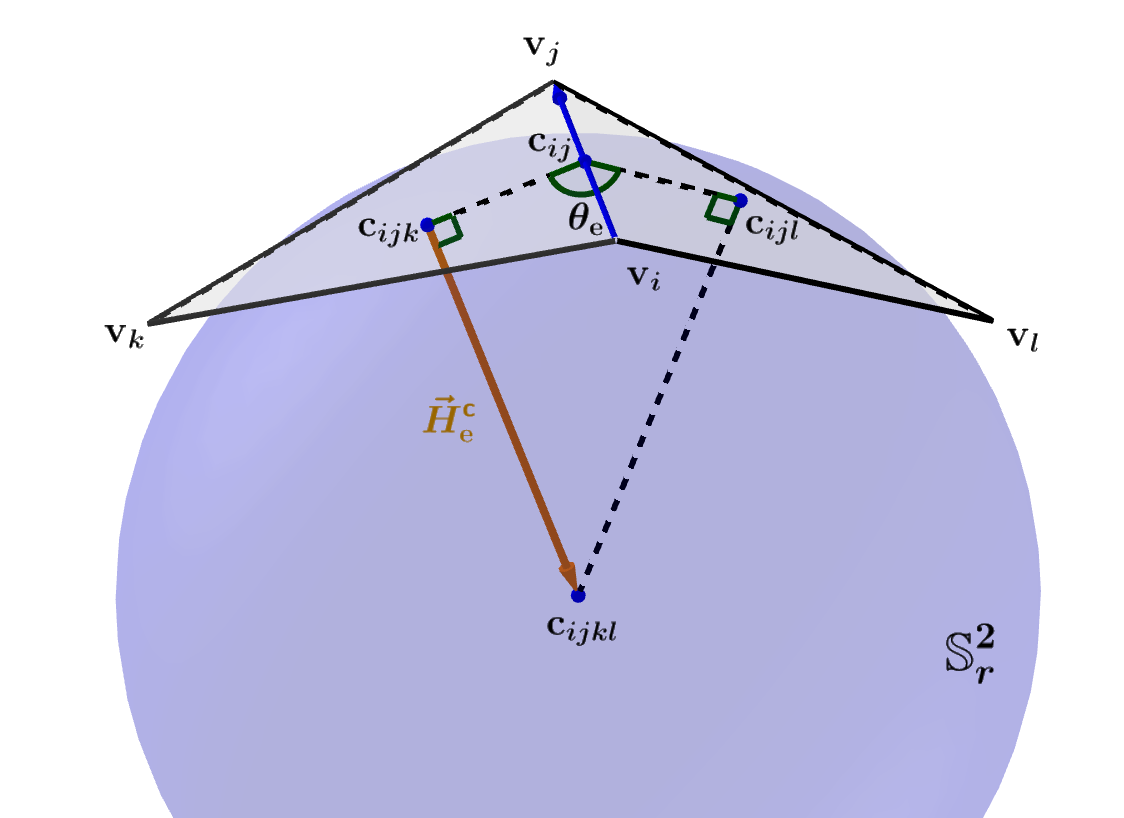}
   \caption{$\vec{H}_{\mathbf{e}}^{\mathsf{c}}$}
\end{subfigure}
\caption{\label{fig:DiscreteMean}(a) The discrete mean curvature vector $\vec{H}_{\mathbf{e}}$ along the edge $\mathbf{e}:=[\mathbf{v}_i, \mathbf{v}_j]$. (b) The reciprocal discrete mean curvature vector $\vec{H}_{\mathbf{e}}^{\mathsf{c}}$ along the edge $\mathbf{e}:=[\mathbf{v}_i, \mathbf{v}_j]$.}
\end{figure}
The primal Laplacian \eqref{primalL3} and the dual Laplacian \eqref{dualL3} are expressed in different forms; in other words, they construct different coordinate systems to describe the Laplacian in $\mathbb{R}^3$. Since the coordinate systems constructed by the primal and dual constructions are ambiguous in the ambient space $\mathbb{R}^3$, we consider the derivatives of functions and vector fields on hypersurfaces from a projective perspective.

Let $\mathcal{M}$ be a hypersurface in $\mathbb{R}^3$. For any point $x\in\mathcal{M}$ and vector $\mathbf{v}\in\mathbb{R}^3$, the projection from $\mathbb{R}^3$ to $T_x\mathcal{M}$ is defined as follows,
\begin{align*}
P_{T_x\mathcal{M}}(\mathbf{v})=\mathbf{v}-(\mathbf{v}\cdot \mu(x))\mu(x),\quad \mu(x)\in T_x^{\perp}\mathcal{M}.
\end{align*}
Let $U\supset\mathcal{M}$ be an open subset of $\mathbb{R}^3$. Assuming that $h:U\to\mathbb{R}$ is a differentiable function and $X:\mathcal{M}\to\mathbb{R}^3$ is a differentiable vector field, we define the tangential gradient of $h$ and the tangential divergence of $X$ with respect to $\mathcal{M}$ as
\begin{align*}
\nabla h(x)=&P_{T_x\mathcal{M}}(Dh(x))=Dh(x)-(Dh(x)\cdot\mu(x))\mu(x),\\
\mathrm{div}_{\mathcal{M}}X(x)=&\mathrm{div}_{\mathbb{R}^3}X(x)-D_{\mu(x)}X(x)\cdot\mu(x)
\end{align*}
where $D$ is the classical gradient operator in $\mathbb{R}^3$.
 
Take $X(x):= \nabla h(x)$, we have
\begin{align}\label{laplacianR3}
\Delta_{\mathcal{M}}h\equiv\mathrm{div}_{\mathcal{M}}\nabla h
=&\mathrm{div}_{\mathcal{M}}Dh-\mathrm{div}_{\mathcal{M}}\left((Dh\cdot\mu)\mu\right)=\mathrm{div}_{\mathcal{M}}Dh+\vec{H}\cdot Dh\nonumber\\
=&\Delta_{\mathbb{R}^3}h-D^2h(\mu, \mu)+\vec{H}\cdot Dh,
\end{align}
where $D^2h(\mu, \mu)$ is the second derivative of $h$ in the direction of $\mu\in T_x^{\perp}\mathcal{M}$ and $\vec{H}$ is the mean curvature vecture.

Consider a tetrahedral mesh $\mathcal{M}$ with each tetrahedron $\tau\in\mathcal{M}$ embedded by $\mathbf{f}$ in $\mathbb{R}^3$. Since the vertices of $\tau$ under the embedding lie on its circumsphere $\mathbb{S}_{\tau}$, the second derivative of $f:=\mathbf{f}\rvert_{\tau}$ in the normal direction $\mu\in T_{\mathbf{v}}^{\perp}\mathbb{S}_{\tau}$ vanishes for all vertices $\mathbf{v}\in \tau$. Note that,  according to Remark~\ref{rmk:normal}, the normal direction to each vertex of the tetrahedron is the radial direction of its circumsphere. Thus, the equation \eqref{laplacianR3} on the tetrahedron $\tau$ can be rewritten as
\begin{align}\label{laplacianR3_1}
\Delta_{\mathbb{R}^3}f=\Delta_{\partial\tau}f-\vec{H}\cdot Df.
\end{align}
In other words, we have the splitting formula for the Laplacian in $\mathbb{R}^3$ into the Laplacian of $\mathcal{M}$ and the mean curvature term associated with the embedding. Most importantly, with the equation \eqref{eq:EL}, we can solve the critical point of the Dirichlet energy by requiring that the left-hand side of \eqref{laplacianR3_1} be equal to zero.

\subsection{Associative Laplacian for the dual construction}
Before we prove that the dual Laplacian given by \eqref{dualL}, \eqref{dualL1} and \eqref{dualL2_3} provides the optimal approximation to the smooth Laplacian of $\mathbb{R}^3$, we consider the associated Laplacian according to the dual Laplacian. First, we give an ansatz $\widetilde{w}_{ij}$ and $\widetilde{w}_{ijkl}$ associated with $w_{ij}$ and $w_{ijkl}$ in \eqref{dualL2_3} as follows,
\begin{subequations}\label{ansatz}
\begin{align}
\widetilde{w}_{ij}=&\sum_{[\mathbf{v}_i, \mathbf{v}_j, \mathbf{v}_k, \mathbf{v}_l]}\widetilde{w}_{ijkl}+\widetilde{w}_{ijlk},\\
\widetilde{w}_{ijkl}
=&\frac{|[\mathbf{v}_i, \mathbf{v}_j]|}{8}\left(\frac{\cot\alpha_{ij}+\cot\beta_{ij}}{2}\right)^{\top}\nonumber\\
&+\frac{|[\mathbf{v}_i, \mathbf{v}_j]|}{8}\left(\frac{1-\cos\theta_{ij}^{kl}}{\sin\theta_{ij}^{kl}}\right)^2
\left[\left(\frac{\cot\alpha_{ij}+\cot\beta_{ij}}{2}\right)^{\perp}\right]^{3}.
\end{align}
\end{subequations}

Without loss of generality, let $\mathcal{S}$ be a closed, simply--connected two--dimensional triangular surface. Given a vertex $\mathbf{v}_i\in\mathcal{S}$ and any vertex $\mathbf{v}_j$ adjacent to $\mathbf{v}_i$, we denote $\alpha_{ij}$ and $\beta_{ij}$ be the angles opposite the edge $\mathbf{e}:=[\mathbf{v}_i, \mathbf{v}_j]$ in the two incident triangles and $\theta_{\mathbf{e}}$ be the dihedral angle on the edge $[\mathbf{v}_i, \mathbf{v}_j]$. Based on the weight $\widetilde{w}_{ijkl}$ in \eqref{ansatz}, we introduce the reciprocal discrete mean curvature corresponding to the edge $\mathbf{e}$.

\begin{definition}\label{discreteHcd}
Suppose $\mathrm{T}_k:= [\mathbf{v}_i, \mathbf{v}_j, \mathbf{v}_k]$ and $\mathrm{T}_l:= [\mathbf{v}_i, \mathbf{v}_j, \mathbf{v}_l]$ are two non--coplanar triangles with a common edge $\mathbf{e}:=[\mathbf{v}_i, \mathbf{v}_j]$ and $\theta_{\mathbf{e}}$ is the dihedral angle at $\mathbf{e}$ enclosed by $\mathrm{T}_k$ and $\mathrm{T}_l$.  
The operator $J_k$ $(J_l)$ is a complex structure in the plane of the triangle $\mathrm{T}_k$ $(\mathrm{T}_l)$, by rotating $90^{\circ}$, and then the reciprocal discrete mean curvature vector to the edge $\mathbf{e}$ is defined by
\begin{align}\label{discreteHc}
2\vec{H}_{\mathbf{e}}^{\mathsf{c}}:=\left(\frac{1-\cos\theta_{\mathbf{e}}}{\sin\theta_{\mathbf{e}}}\right)J\mathbf{e},\quad J\mathbf{e}:=(J_{l,k}\circ J_{k})\mathbf{e}
\end{align}
where $J_{l,k}$ is the complex structure on the plane spanned by $\{J_l\mathbf{e}, J_k\mathbf{e}\}$ with positive orientation. Obviously, $J\mathbf{e}$ is perpendicular to $J_k\mathbf{e}$. The reciprocal discrete mean curvature is given by
\begin{align}\label{discreteHcq}
|\vec{H}_{\mathbf{e}}^{\mathsf{c}}|=\frac{1-\cos\theta_{\mathbf{e}}}{\sin\theta_{\mathbf{e}}}\frac{|\mathbf{e}|}{2}.
\end{align}
Specifically, $\theta_{\mathbf{e}}=0$ means that the tetrahedron with the faces $\mathrm{T}_k$ and $\mathrm{T}_l$ degenerates to a triangle and the reciprocal discrete mean curvature becomes zero. Furthermore, $\theta_{\mathbf{e}}=\pi$, the reciprocal discrete mean curvature becomes infinity.
\end{definition}

\begin{remark}
We will give some explanations for the words used in defining the ``associated" Laplacian in \eqref{ansatz} and the ``reciprocal" discrete mean curvature in \eqref{discreteHc}.
\begin{itemize}
\item``Associated":\\
In order to show that the dual Laplacian in \eqref{dualL3} satisfies the geometric property \eqref{laplacianR3_1}, our strategy is to determine the upper estimate of the discrete Dirichlet energy $-\mathbf{f}^{\intercal}\mathbf{L}\mathbf{f}$ for  $\mathbf{L}=[w_{ij}]$ in \eqref{dualL3}.
Based on the estimates in \hyperref[AppendixB]{Appendix A}, we define the associated Laplacian satisfying the geometric property \eqref{laplacianR3_1} in the weak* sense and its discrete Dirichlet also gives the upper estimate of the discrete Dirichlet energy for dual Laplacian. As long as the mesh size tends to 0, the discrete Dirichlet of the associated Laplacian forces the discrete Dirichlet energy of the dual Laplacian to converge to the smooth Dirichlet energy, and so we can conclude that the dual Laplacian must satisfy \eqref{laplacianR3_1} in a weak* sense.
\item ``Reciprocal":\\
In light of \textup{Figure ~\ref{fig:DiscreteMean} (b)} and the discussion in Section~\ref{Geoperps}, we will realize that the tangent sphere at the points $\mathbf{c}_{ijk}$ and $\mathbf{c}_{ijl}$ provides an internal approximation for discrete mean curvature along the edges. Besides, the quantity $|\vec{H}_{\mathbf{e}}^{\mathsf{c}}|$ in \eqref{discreteHcq} is the radius of the tangent sphere and its reciprocal $|\vec{H}_{\mathbf{e}}^{\mathsf{c}}|^{-1}$ is the curvature of the tangent sphere.
\end{itemize}
\end{remark}

\subsection{Geometric perspective of $\vec{H}_{\mathbf{e}}^{\mathsf{c}}$}\label{Geoperps}
Suppose $\mathrm{T}_k:= [\mathbf{v}_i, \mathbf{v}_j, \mathbf{v}_k]$ and $\mathrm{T}_l:= [\mathbf{v}_i, \mathbf{v}_j, \mathbf{v}_l]$ are two non--coplanar triangles with common edge $\mathbf{e}:=[\mathbf{v}_i, \mathbf{v}_j]$ and $\theta_{\mathbf{e}}$ is the dihedral angle of the edge $\mathbf{e}$. In the planes of the triangle $\mathrm{T}_k$ and $\mathrm{T}_l$, we denote $J_k$ and $J_l$ as the complex structures rotated by $90^{\circ}$, respectively. After that, $J$ is the complex structure associated with the plane $\mathrm{Span}\{J_k\mathbf{e}, J_l\mathbf{e}\}$ constructed in Definition~\ref{discreteHcd}.

In the following arguments, we can see Figure~\ref{fig:DiscreteMean} (a) for description.
According to Definition~\ref{discreteEdge}, the discrete mean curvature along the edge $\mathbf{e}$ is given by
\begin{align*}
\vec{H}_{\mathbf{e}}=J_k\left(\frac{\mathbf{e}}{2}\right)-J_l\left(\frac{\mathbf{e}}{2}\right),\quad |\vec{H}_{\mathbf{e}}|=\sin((\pi-\theta_{\mathbf{e}})/2)|\mathbf{e}|.
\end{align*}
Furthermore, $2|\vec{H}_{\mathbf{e}}|/|\mathbf{e}|=2\sin((\pi-\theta_{\mathbf{e}})/2)$ gives an approximation to the sweep angle between the unit vectors $J_k\left(\frac{\mathbf{e}}{2}\right)$ and $J_l\left(\frac{\mathbf{e}}{2}\right)$ across the edge $\mathbf{e}$.

Similarly, we study the reciprocal discrete mean curvature along the edge $\mathbf{e}$ in Definition ~\ref{discreteHcd}.
Use Figure~\ref{fig:DiscreteMean} (b) as a reference, we consider the quadriateral  in the plane $*\mathbf{e}$ spanned by $\mathbf{c}_{ij}$, $\mathbf{c}_{ijk}$, $\mathbf{c}_{ijkl}$ and $\mathbf{c}_{ijl}$, which are defined as follows 
\begin{subequations}\label{quadpts}
\begin{align}
&\mathbf{c}_{ij}=\frac{\mathbf{v}_i+\mathbf{v}_j}{2}, \quad |\mathbf{c}_{ij}-\mathrm{v}_{i}|=|\mathbf{e}|/2,\\
&\mathbf{c}_{ijk}=\mathbf{c}_{ij}+\frac{1}{2}J_k(\mathbf{v}_j-\mathbf{v}_i), \quad |\mathbf{c}_{ijk}-\mathbf{c}_{ij}|=|\mathbf{e}|/2,\\
&\mathbf{c}_{ijl}=\mathbf{c}_{ij}-\frac{1}{2}J_l(\mathbf{v}_j-\mathbf{v}_i), \quad |\mathbf{c}_{ijl}-\mathbf{c}_{ij}|=|\mathbf{e}|/2.
\end{align}
Specifically, the point $\mathbf{c}_{ijkl}$ is determined by
\begin{align}
|\mathbf{c}_{ijkl}-\mathbf{c}_{ijk}|=&|\mathbf{c}_{ijkl}-\mathbf{c}_{ijl}|=h,\\
(\mathbf{c}_{ijk}-\mathbf{c}_{ij})\cdot(\mathbf{c}_{ijk}-\mathbf{c}_{ijkl})=&(\mathbf{c}_{ijl}-\mathbf{c}_{ij})\cdot(\mathbf{c}_{ijkl}-\mathbf{c}_{ijl})=0.\label{eq:inner}
\end{align}
\end{subequations}
Note that $\mathbf{c}_{ijk}$, $\mathbf{c}_{ijkl}$ and $\mathbf{c}_{ijl}$ do not necessarily lie in the tetrahedron. 

Choosing the plane $*\mathbf{e}:=\mathbb{R}^2$, we can select the coordinate system shown in Figure~\ref{fig:refercoord}: translate $\mathbf{c}_{ij}$ to the origin and place the edge $[\mathbf{c}_{ij}, \mathbf{c}_{ijk}]$ on the positive $x-$axis and then re-coordinate  $\mathbf{c}_{ij}$, $\mathbf{c}_{ijk}$, $\mathbf{c}_{ijkl}$ and $\mathbf{c}_{ijl}$ given in \eqref{quadpts} as follows
\begin{align}\label{recoord}
\mathbf{c}_{ij}=(0, 0),\quad \mathbf{c}_{ijk}=(|\mathbf{e}|/2, 0),\quad
\mathbf{c}_{ijl}=|\mathbf{e}|/2(\cos\theta_{\mathbf{e}}, \sin\theta_{\mathbf{e}}),\quad \mathbf{c}_{ijkl}=(|\mathbf{e}|/2, h).
\end{align}
Applying the inner product in \eqref{eq:inner} to the given coordinates \eqref{recoord} , we obtain an expression for $h$:
\begin{align}
h=\frac{|\mathbf{e}|}{2}\frac{1-\cos\theta_{\mathbf{e}}}{\sin\theta_{\mathbf{e}}}.
\end{align} 
From the geometric perspective in the cross-section $*\mathbf{e}$, the circle centered at $\mathbf{c}_{ijkl}$ with radius $h$ is tangent to both planes containing $\mathrm{T}_k$ and $\mathrm{T}_l$ at points $\mathbf{c}_{ijk}$ and $\mathbf{c}_{ijl}$, respectively, so the curvature of this circle is the reciprocal of its radius. Therefore, the reciprocal discrete mean curvature gives an approximation of bending on the edge $\mathbf{e}$ by $1/|\vec{H}_{\mathbf{e}}^{\mathsf{c}}|$. 

\begin{figure}[t]
  \centering
  \includegraphics[width=0.7\linewidth]{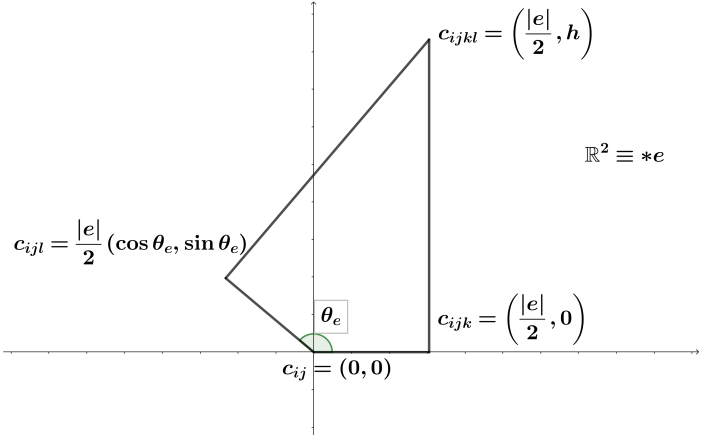}
  %
  %
  \caption{\label{fig:refercoord}
           Select the coordinates on the plane $*\mathbf{e}$ and redefine the coordinates of $\mathbf{c}_{ij}$, $\mathbf{c}_{ijk}$, $\mathbf{c}_{ijkl}$ and $\mathbf{c}_{ijl}$.}
\end{figure}

\begin{remark}
For the smooth surfaces, the sectional curvature of the surface is determined by the curvature of the curve where the selected section intersects the surface. For discrete surfaces, Definition~\ref{discreteEdge} and Definition~\ref{discreteHcd} are based on the idea of defining the discrete mean curvature on an edge $\mathbf{e}$ over the section spanned by $\{J_k\mathbf{e}, J_l\mathbf{e}\}$. Comparing these methods, Definition~\ref{discreteHcd} provides a quadratic approximation to the mean curvature on the edge $\mathbf{e}$. The numerical accuracy of these methods is shown in Table.
\end{remark}

\section{Main theorem}\label{sec:6}
Let $\mathcal{M}_{\delta}$ be a triangle mesh with mean edge length $\delta$. For any two adjacent triangles in $\mathcal{M}_{\delta}$, we can construct a tetrahedron by connecting the opposite vertices with a line segment to obtain a tetrahedral mesh $\widetilde{\mathcal{M}}_{\delta}$. Note that the tetrahedra in $\widetilde{\mathcal{M}}_{\delta}$ may overlap, but the data used to construct the dual Laplacian restricted to $\mathcal{M}_{\delta}$ are not affected by these overlaps. More specifically, we can define the cotangent Laplacian $\mathbf{L}$ for a triangle mesh $\mathcal{M}_{\delta}$ with weights $w_{ij}=\frac{1}{2}\left(\cot\alpha_{ij}+\cot\beta_{ij}\right)$ in \eqref{triangle}.

Replacing $w_{ij}$ in $\mathbf{L}$ by weights $ w^d_{ij}$ in \eqref{dualL2_3}
\begin{align*}
\begin{split}
w^d_{ij}=&\sum_{[\mathbf{v}_i, \mathbf{v}_j, \mathbf{v}_k, \mathbf{v}_l]}w_{ijkl}+w_{ijlk}\\
w^d_{ijkl}=&\frac{|[\mathbf{v}_i, \mathbf{v}_j]|}{8}\cot\theta_{ij}^{kl}\left(\frac{2\cot\alpha_{ij}\cot\beta_{ij}}{\cos\theta_{ij}^{kl}}-(\cot^2\alpha_{ij}+\cot^2\beta_{ij})\right).
\end{split}
\end{align*}
yields the dual Laplacian $\mathbf{L}^d$ for the tetrahedral mesh $\widetilde{\mathcal{M}}_{\delta}$ restricted to $\mathcal{M}_{\delta}$.
In the restriction of $\mathcal{M}_{\delta}$, the associated Laplacian $\widetilde{\mathbf{L}}$ can be further deduced from the dual Laplacian $\mathbf{L}^d$ by replacing $w^d_{ij}$ by the weights $\widetilde{w}_{ijkl}$ in \eqref{ansatz}.
\begin{align*}
\begin{split}
\widetilde{w}_{ij}=&\sum_{[\mathbf{v}_i, \mathbf{v}_j, \mathbf{v}_k, \mathbf{v}_l]}\widetilde{w}_{ijkl}+\widetilde{w}_{ijlk}\\
\widetilde{w}_{ijkl}
=&\frac{|[\mathbf{v}_i, \mathbf{v}_j]|}{8}\left(\frac{\cot\alpha_{ij}+\cot\beta_{ij}}{2}\right)^{\top}\nonumber\\
&+\frac{|[\mathbf{v}_i, \mathbf{v}_j]|}{8}\left(\frac{1-\cos\theta_{ij}^{kl}}{\sin\theta_{ij}^{kl}}\right)^2
\left[\left(\frac{\cot\alpha_{ij}+\cot\beta_{ij}}{2}\right)^{\perp}\right]^{3}.
\end{split}
\end{align*}

\begin{theorem}\label{mainthm}
Assuming that $\mathcal{M}_{\delta}$ and $\widetilde{\mathcal{M}}_{\delta}$ are the triangle and the tetrahedral mesh, the discrete Laplacian matrix of $\mathcal{M}_{\delta}$ and the associated Laplacian matrix of $\widetilde{\mathcal{M}}_{\delta}$ are given by
$
\mathbf{L}=(w_{ij})$ and $\widetilde{\mathbf{L}}=(\widetilde{w}_{ij})
$
where $w_{ij}$ is determined by \eqref{primalL2} and \eqref{primalL3} and $\widetilde{w}_{ij}$ is determined by \eqref{dualL2} and \eqref{ansatz}.
Then for all piecewise linear functions $\mathbf{f}$, we have the weak* convergence  
\begin{align}
\widetilde{\mathbf{L}}_{ij}\stackrel{\ast}{\rightharpoonup}\mathbf{L}_{ij}-\vec{H}_{[\mathbf{v}_i, \mathbf{v}_j]}
\end{align}
when $\delta\to 0$, and it is unique up to a constant.
\end{theorem}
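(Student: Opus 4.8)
The plan is to read the claimed identity as a statement about functionals on the space of piecewise-linear functions, paired through the discrete Dirichlet energy \eqref{approach}, and to prove it by the sandwiching strategy flagged in the ``Associated'' remark. First I would fix the continuous template: by \eqref{laplacianR3_1} one has $\Delta_{\mathbb{R}^3} f = \Delta_{\partial\tau} f - \vec{H}\cdot Df$, which holds precisely because the four vertices of each tetrahedron $\tau\in\widetilde{\mathcal{M}}_\delta$ lie on its circumsphere, so that the normal second derivative $D^2 f(\mu,\mu)$ vanishes at every vertex (Remark~\ref{rmk:normal}). This fixes the dictionary I want the discrete identity to realize: $\widetilde{\mathbf{L}}$ should play the role of $\Delta_{\mathbb{R}^3}$ restricted to $\mathcal{M}_\delta$, the cotangent Laplacian $\mathbf{L}$ the role of the surface Laplacian $\Delta_{\partial\tau}$, and $\vec{H}_{[\mathbf{v}_i,\mathbf{v}_j]}$ the role of the mean-curvature term $\vec{H}\cdot Df$.

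Next I would split the ansatz weight \eqref{ansatz} into its tangential and normal summands and identify each geometrically. The tangential summand $\tfrac{|\mathbf{e}|}{8}\bigl(\tfrac{\cot\alpha_{ij}+\cot\beta_{ij}}{2}\bigr)^{\top}$ is, by the definition of $\mathbf{L}^{\top}$ at the end of Section~\ref{sec:4}, just a rescaling of the cotangent weight that defines $\mathbf{L}$. For the normal summand I would use the trigonometric collapses $\tfrac{1-\cos\theta}{\sin\theta}=\tan(\theta/2)$ and $\bigl(\tfrac{\cot\alpha+\cot\beta}{2}\bigr)^{\perp}=\sin\!\bigl(\tfrac{\pi-\theta}{2}\bigr)=\cos(\theta/2)$ to rewrite it as $\tfrac{|\mathbf{e}|}{8}\sin^2(\theta/2)\cos(\theta/2)$, with $\theta=\theta_{ij}^{kl}$ the dihedral angle of the tetrahedron, which coincides with the dihedral angle of the edge $\mathbf{e}$ in the original surface. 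This exhibits the normal summand as the carrier of the reciprocal mean curvature $\vec{H}_{\mathbf{e}}^{\mathsf{c}}$ of Definition~\ref{discreteHcd} and lets me compare it with the edge mean curvature $|\vec{H}_\mathbf{e}|=\cos(\theta/2)|\mathbf{e}|$ from \eqref{discreteE2}, tracking the complex structures $J_k,J_l$ so that the limit is the vector $\vec{H}_{[\mathbf{v}_i,\mathbf{v}_j]}$ and not merely its magnitude.

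The analytic heart is the weak* passage. Pairing $\widetilde{\mathbf{L}}$ against a fixed piecewise-linear $\mathbf{f}$ and letting $\delta\to0$, I would invoke the Dirichlet-energy estimates of Appendix~A to place the discrete Dirichlet energy $-\mathbf{f}^{\intercal}\mathbf{L}^{d}\mathbf{f}$ of the dual Laplacian below $-\mathbf{f}^{\intercal}\widetilde{\mathbf{L}}\mathbf{f}$, and to show both are squeezed against the smooth Dirichlet energy $(-\Delta f,f)_{L^2}$. The delicate bookkeeping is the competing scaling: the edge length satisfies $|\mathbf{e}|\sim\delta$ while the angular defect obeys $\pi-\theta\sim\kappa\delta$, so $\tan(\theta/2)\sim 2/(\kappa\delta)$; these must combine so that the tangential summand limits to $\mathbf{L}_{ij}$ and the normal summand to $\vec{H}_{[\mathbf{v}_i,\mathbf{v}_j]}$, yielding $\widetilde{\mathbf{L}}_{ij}\stackrel{\ast}{\rightharpoonup}\mathbf{L}_{ij}-\vec{H}_{[\mathbf{v}_i,\mathbf{v}_j]}$. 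I expect this uniform control of the two scales, together with verifying that the Appendix~A bound is tight enough to \emph{force} convergence rather than merely bound the energy, to be the main obstacle.

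Finally, uniqueness up to an additive constant is immediate: both $\mathbf{L}$ and $\widetilde{\mathbf{L}}$ annihilate the constant vector $\mathbf{1}$, so the limiting functional determines the decomposition only modulo the one-dimensional kernel of the Laplacian.
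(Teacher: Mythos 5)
Your proposal follows essentially the same route as the paper: split the ansatz weight \eqref{ansatz} into its tangential summand, which is exactly $\tfrac{1}{8}\mathbf{L}_{ij}$, and its normal summand, which after the trigonometric collapse $\bigl(\tfrac{1-\cos\theta}{\sin\theta}\bigr)^2\sin^{2}\bigl(\tfrac{\pi-\theta}{2}\bigr)=\sin^{2}(\theta/2)\to1$ as the dihedral angle $\theta_{ij}^{kl}\to\pi$ (via the flattening argument of \cite{cheeger1984} and the vertex/edge equivalence of Remark~\ref{rmkVE}) recovers $-\tfrac{1}{8}\vec{H}_{[\mathbf{v}_i,\mathbf{v}_j]}$. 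The only real difference is one of emphasis: the paper's proof is a direct normalized computation on unit edges $\mathbf{e}_{ij}\in\mathrm{X}_{\delta}$ establishing \eqref{wstar}, and it defers the Appendix~A Dirichlet-energy sandwich (which you place at the analytic heart) to the subsequent corollary comparing the dual and primal constructions.
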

\begin{remark}
Based on the approach \eqref{approach} i.e., $\left(-\Delta f, f\right)_{L^2} \approx -\mathbf{f}^{\intercal}\mathbf{L}\mathbf{f}$, the cotangent weight Laplacian is implicitly defined by the energy functional (Dirichlet energy). Proving the convergence of the cotangent weight Laplacian is equivalent to demonstrating the convergence of the energy functional. Unlike the convergence of functions, we want to prove the convergence of functionals in (algebraic) dual space and therefore must consider weak* convergence.
\end{remark}
 
\begin{proof}
Let 
$
\mathrm{X}_{\delta}=\left\{\mathbf{e}_{ij}:=\frac{[\mathbf{v}_i, \mathbf{v}_j]}{|[\mathbf{v}_i, \mathbf{v}_j]|}\bigg|  [\mathbf{v}_i, \mathbf{v}_j]\in\mathcal{M}_{\delta}\right\}.
$ Then $\mathrm{X}\equiv\bigcup_{\delta>0}\mathrm{X}_{\delta}$ is a family of linearised characteristic functions for points on a smooth surface. Since we assume that the unknown function $\mathbf{f}$ is a piecewise linear approximation of the unknown exact solution, or that we consider a gradient approximation based on linearization along the edges i.e., $(\mathbf{v}_j-\mathbf{v}_i)\nabla f=(f_j-f_i)$ for $[\mathbf{v}_i, \mathbf{v}_j]\in\mathcal{M}_{\delta}$, so we only need to prove for $\mathbf{e}_{ij}\in\mathrm{X}$ and $\delta\to 0$,
$
\widetilde{\mathbf{L}}_{ij}\stackrel{\ast}{\rightharpoonup}\mathbf{L}_{ij}-\vec{H}_{[\mathbf{v}_i, \mathbf{v}_j]}.
$ 

Acting on the edges $[\mathbf{v}_i,\mathbf{v}_j]$ with weights $\widetilde{w}_{ijkl}$, the first term in \eqref{ansatz} is in the expression $\frac{1}{8}\mathbf{L}_{ij}$, so we can prove that for $\mathbf{e}_{ij}\in\mathrm{X}$ and $\delta\to 0$
\begin{align*}
\frac{|[\mathbf{v}_i, \mathbf{v}_j]|}{8}\left(\frac{1-\cos\theta_{ij}^{kl}}{\sin\theta_{ij}^{kl}}\right)^2
\left[\left(\frac{\cot\alpha_{ij}+\cot\beta_{ij}}{2}\right)^{\perp}\right]^{3}\stackrel{\ast}{\rightharpoonup}\frac{-1}{8}\vec{H}_{[\mathbf{v}_i, \mathbf{v}_j]}.
\end{align*}
Based on the equation in \eqref{discreteH}, we simplify the argument and prove that for $\mathbf{e}_{ij}\in\mathrm{X}$ and $\delta\to 0$
\begin{align}\label{wstar}
\left(\frac{1-\cos\theta_{ij}^{kl}}{\sin\theta_{ij}^{kl}}\right)^2
\left[\left(\frac{\cot\alpha_{ij}+\cot\beta_{ij}}{2}\right)^{\perp}\right]^{2}\stackrel{\ast}{\rightharpoonup} 1.
\end{align}
Considering the discrete mean curvature on $[\mathbf{v}_i, \mathbf{v}_j]$, we use the equivalent arguments of the discrete mean curvature at a vertex and on an edge in Remark~\ref{rmkVE}, which shows that
\begin{align*}\label{normHedge1}
\left|\left(\frac{\cot\alpha_{ij}+\cot\beta_{ij}}{2}\right)^{\perp}\mathbf{e}_{ij}\right|^2=\left|\mathbf{L}^{\perp}_{ij}\frac{[\mathbf{v}_i, \mathbf{v}_j]}{|[\mathbf{v}_i, \mathbf{v}_j]|}\right|^2=\frac{\left|\vec{H}_{[\mathbf{v}_i, \mathbf{v}_j]}\right|^2}{|[\mathbf{v}_i, \mathbf{v}_j]|^2}=\sin^2((\pi-\theta_{ij}^{kl})/2).
\end{align*}

For any $\mathbf{e}_{ij}\in\mathrm{X}$ and any point $\mathbf{v}$ on a smooth surface , there exists a sequence of triangles $\{[\mathbf{v}^s_i, \mathbf{v}^s_j, \mathbf{v}^s_k], [\mathbf{v}^s_i, \mathbf{v}^s_j, \mathbf{v}^s_l]\}_{s=1}^{\infty}$ with their mean edge length $\delta_s$ satisfying $\mathbf{v}^s_i=\mathbf{v}$
\begin{align}
[\mathbf{v}^s_i, \mathbf{v}^s_j]=C_s \mathbf{e}_{ij}, \quad C_s:= |[\mathbf{v}^s_i, \mathbf{v}^s_j]|.
\end{align}

Based on the convergent argument in \cite{cheeger1984}, the triangles $[\mathbf{v}^s_i, \mathbf{v}^s_j, \mathbf{v}^s_k]$ and $[\mathbf{v}^s_i, \mathbf{v}^s_j, \mathbf{v}^s_l]$ converge to triangles $[\mathbf{v}, \mathbf{v}^*_j, \mathbf{v}^*_k]$ and $[\mathbf{v}, \mathbf{v}^*_j, \mathbf{v}^*_l]$ on the tangent plane at point $\mathbf{v}$, where the dihedral angle $\theta_{ij}^{kl}$ on the edge $[\mathbf{v}^s_i, \mathbf{v}^s_j]$  becomes to $\pi$ as $\delta_s\to 0$. Therefore, we have 
\begin{align*}
&\lim_{\delta_s\to0}\sup_{\substack{\mathbf{e}_{ij}\in\mathrm{X}\\|\mathbf{e}_{ij}|=1}}\left\{\left|\left(\frac{1-\cos\theta_{ij}^{kl}}{\sin\theta_{ij}^{kl}}\right)\left(\frac{\cot\alpha_{ij}+\cot\beta_{ij}}{2}\right)^{\perp}\mathbf{e}_{ij}\right|^2\right\}\\
=&\lim_{\delta_s\to0}\left\|\left(\frac{1-\cos\theta_{ij}^{kl}}{\sin\theta_{ij}^{kl}}\right)\vec{H}_{\mathbf{e}_{ij}}\right\|^2
=\lim_{\delta_s\to0}\left\|\left(\frac{1-\cos\theta_{ij}^{kl}}{\sin\theta_{ij}^{kl}}\right)\sin((\pi-\theta_{ij}^{kl})/2)\right\|^2=1
\end{align*}
That is, as the operator, $\left(\frac{1-\cos\theta_{ij}^{kl}}{\sin\theta_{ij}^{kl}}\right)^2
\left[\left(\frac{\cot\alpha_{ij}+\cot\beta_{ij}}{2}\right)^{\perp}\right]^{2}\stackrel{\ast}{\rightharpoonup} 1$. Then we complete the proof.
\end{proof}

\begin{corollary}
Provided the mean edge length $\delta$ is sufficiently small, the dual construction in \eqref{dualL3} gives ``optimal" Laplacians in $\mathbb{R}^3$ compared to the primal construction in \eqref{primalL3}.
\end{corollary}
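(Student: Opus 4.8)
The plan is to derive the Corollary directly from Theorem~\ref{mainthm} together with the definition of an optimal Laplacian and the Dirichlet-energy comparison indicated in the Remark on the word ``associated''. In outline: I would first reinterpret the weak* limit of the theorem as the optimality condition for the associated Laplacian, then transfer that property from the auxiliary $\widetilde{\mathbf{L}}$ to the genuine dual Laplacian $\mathbf{L}^{d}$ by sandwiching discrete Dirichlet energies, and finally contrast this with the primal weight \eqref{primalL3} to conclude that only the dual construction meets the criterion once $\delta$ is small.

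First I would rewrite the conclusion of Theorem~\ref{mainthm}. Because $\vec{H}_{[\mathbf{v}_i,\mathbf{v}_j]}$ is the mean-curvature term carried by the limiting smooth surface and does not depend on the sequence realizing the limit, the statement $\widetilde{\mathbf{L}}_{ij}\stackrel{\ast}{\rightharpoonup}\mathbf{L}_{ij}-\vec{H}_{[\mathbf{v}_i,\mathbf{v}_j]}$ is equivalent to
\begin{align*}
\widetilde{\mathbf{L}}_{ij}+\vec{H}_{[\mathbf{v}_i,\mathbf{v}_j]}\stackrel{\ast}{\rightharpoonup}\mathbf{L}_{ij}.
\end{align*}
Here $\mathbf{L}$ is the cotangent Laplacian of $\partial\tau$, i.e. the discretization of $\Delta_{\partial\tau}$, and $\widetilde{\mathbf{L}}$ plays the role of the discrete $\Delta_{\mathbb{R}^3}$. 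Read against \eqref{laplacianR3_1}, this is exactly the assertion that the sum of the boundary Laplacian and the discrete mean-curvature vector converges weak* to the three-dimensional Laplacian, which is precisely the optimality condition for the associated Laplacian.

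Next I would transfer optimality from $\widetilde{\mathbf{L}}$ to the dual Laplacian $\mathbf{L}^{d}$ itself. By the ansatz \eqref{ansatz}, the discrete Dirichlet energy of $\widetilde{\mathbf{L}}$ dominates that of $\mathbf{L}^{d}$, i.e. $-\mathbf{f}^{\intercal}\widetilde{\mathbf{L}}\mathbf{f}\ge -\mathbf{f}^{\intercal}\mathbf{L}^{d}\mathbf{f}$ for every piecewise linear $\mathbf{f}$, the gap being controlled by the tangential/normal split estimates of Appendix~A. Since this dominating energy tends to the smooth Dirichlet energy $(-\Delta f,f)_{L^2}$ as $\delta\to0$ (this is what the displayed weak* limit encodes through the edge-difference approximation $(\mathbf{v}_j-\mathbf{v}_i)\nabla f=f_j-f_i$), a matching lower bound on $-\mathbf{f}^{\intercal}\mathbf{L}^{d}\mathbf{f}$ then squeezes the dual energy to the same limit; granting such control (the point I return to below), $\mathbf{L}^{d}$ inherits the weak* splitting and the dual construction \eqref{dualL3} is optimal in the sense of the definition above.

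Finally, for the comparison I would show that the primal weight \eqref{primalL3} does not admit this behaviour: the associated construction works precisely because $\widetilde{w}_{ijkl}$ in \eqref{ansatz} separates into a tangential piece reproducing $\tfrac{1}{8}\mathbf{L}_{ij}$ and a normal piece whose square converges to the mean curvature through \eqref{wstar}, whereas $w_{ijkl}=\tfrac{1}{6}|[\mathbf{v}_k,\mathbf{v}_l]|\cot\theta^{ij}_{kl}$ carries the opposite-edge length and dihedral angle and admits no such orthogonal splitting into a $\partial\tau$-Laplacian plus a mean-curvature term; computing its weak* limit under $\delta\to0$ should therefore yield something other than $\mathbf{L}_{ij}-\vec{H}_{[\mathbf{v}_i,\mathbf{v}_j]}$, so the primal construction fails the criterion. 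The step I expect to be the genuine obstacle is the energy sandwiching: beyond the upper estimate $-\mathbf{f}^{\intercal}\widetilde{\mathbf{L}}\mathbf{f}\ge-\mathbf{f}^{\intercal}\mathbf{L}^{d}\mathbf{f}$, one must secure a two-sided control on the dual energy, since without a matching lower bound the weak* limit of $\mathbf{L}^{d}$ need not coincide with that of $\widetilde{\mathbf{L}}$.
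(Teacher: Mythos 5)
Your proposal follows essentially the same route as the paper: the paper's proof of this corollary is a two-sentence appeal to the a priori estimates of Appendix~A (the ``bounded control'' of the dual construction by the associated Laplacian) combined with Theorem~\ref{mainthm}, which is exactly your first two steps. The obstacle you flag at the end --- that the Appendix only supplies an upper estimate $-\mathbf{f}^{\intercal}\mathbf{L}^{d}\mathbf{f}\le-\mathbf{f}^{\intercal}\widetilde{\mathbf{L}}\mathbf{f}$ and that a matching lower bound is needed before the weak* limit transfers from $\widetilde{\mathbf{L}}$ to $\mathbf{L}^{d}$ --- is a real observation, but the paper's own proof does not close it either; it simply asserts that the one-sided control together with $\delta\to 0$ forces both operators to the same limit. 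Your additional third step (showing the primal weight \eqref{primalL3} admits no tangential/normal splitting) goes beyond what the paper writes down, which leaves the comparison with the primal construction implicit.
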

\begin{proof}
According to the argument in \hyperref[AppendixB]{Appendix A}, the associated Laplacian gives the bounded control of the dual construction. When the mean length $\delta$ is sufficiently small, the associated Laplacian satisfying the Euler--Lagrange equation and the dual Laplacian all have weak* converge to the continuous Laplacian. 
\end{proof}
After completing Theorem~\ref{mainthm}, we can define the associated mean curvature $|\vec{H}_a|$ on the edges $[\mathbf{v}_i, \mathbf{v}_j]$ according to  the conventions at the end of Section~\ref{sec:End4} by the equation below,
\begin{align}\label{assoMean}
|\vec{H}_a|
:=\left(\frac{1-\cos\theta_{ij}^{kl}}{\sin\theta_{ij}^{kl}}\right)^2\left[\left(\frac{\cot\alpha_{ij}+\cot\beta_{ij}}{2}\right)^{\perp}\right]^{3}=\left(\frac{1-\cos\theta_{ij}^{kl}}{\sin\theta_{ij}^{kl}}\right)^2\sin^3\left(\frac{\pi-\theta_{ij}^{kl}}{2}\right).
\end{align}

\section{Higher Order Approximation}\label{sec:7}
\begin{figure}
\begin{subfigure}[h]{0.45\textwidth}
   \includegraphics[width=1.\linewidth]{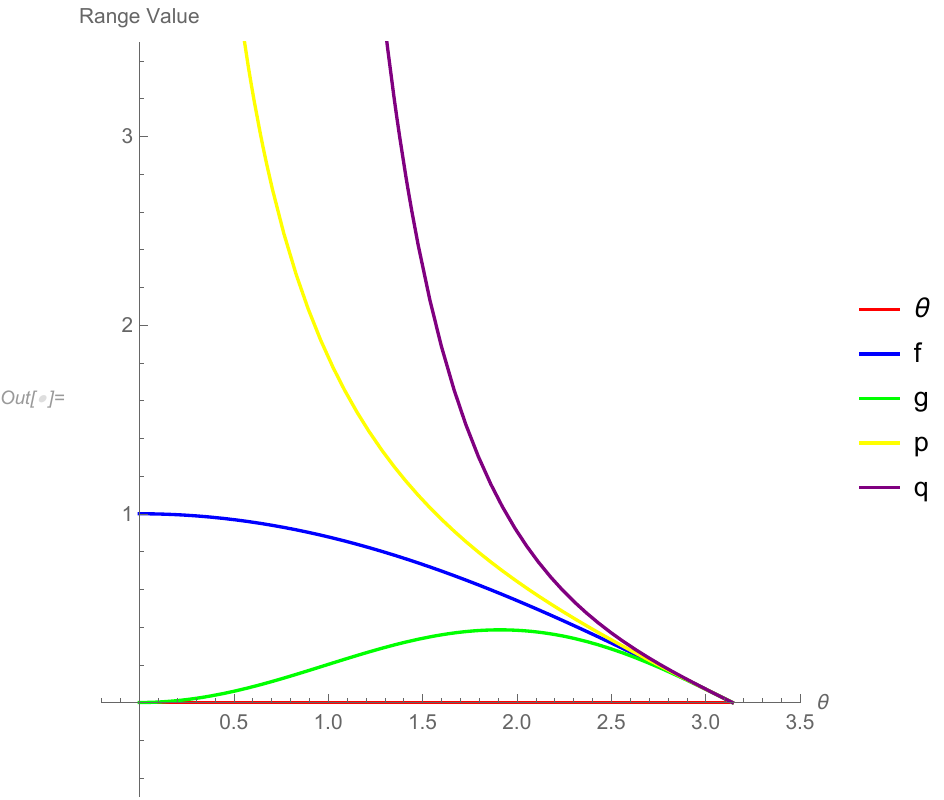}
   \caption{Discrete mean curvature candidates}
   \label{fig:DiscreteMeanCan}
\end{subfigure}
\begin{subfigure}[h]{0.5\textwidth}
   \includegraphics[width=1.1\linewidth]{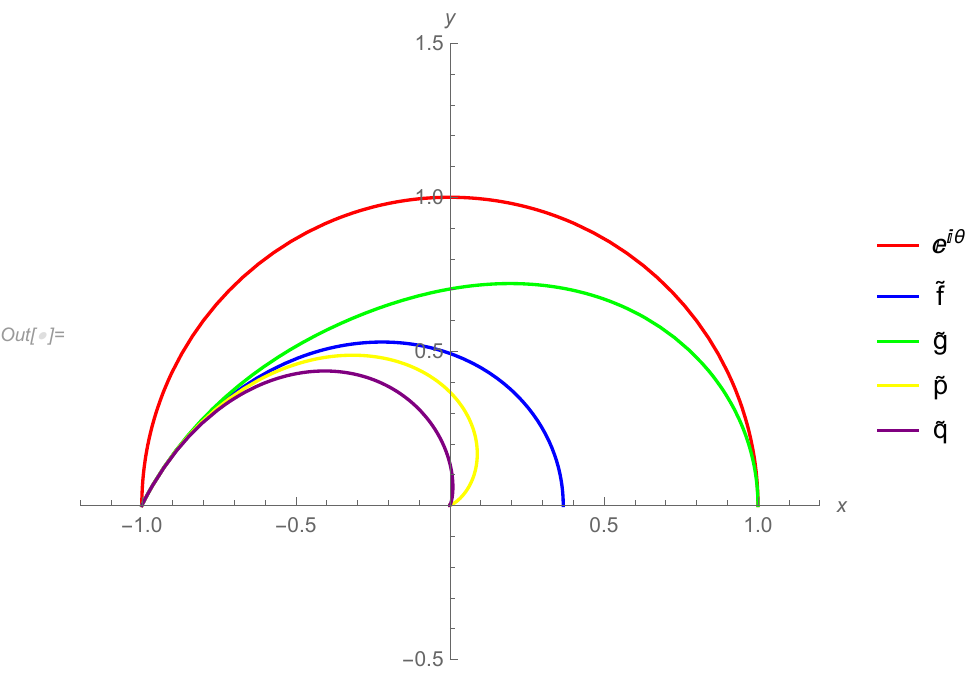}
   \caption{Mobious transformation of Discrete mean curvature candidates}
   \label{fig:DiscreteMeanCan1}
\end{subfigure}
\caption{\label{fig:DiscreteMeanCan}(a) Discrete mean curvatures as angle varies from $0$ to $\pi$ (b) The reciprocal discrete mean curvature vector $\vec{H}_{\mathbf{e}}^{\mathsf{c}}$ along the edge $\mathbf{e}:=[\mathbf{v}_i, \mathbf{v}_j]$.}
\end{figure}
The mean curvature of a point on a smooth manifold is defined as the average of the two principal curvatures. Thus, the same idea can be applied to points on the edges $[\mathbf{v}_i, \mathbf{v}_j]$ of the triangle mesh $\mathcal{M}$. A principal curvature along $[\mathbf{v}_i, \mathbf{v}_j]$ is clearly zero, since it is a straight line. Thus, the discrete mean curvature on $[\mathbf{v}_i, \mathbf{v}_j]$ is determined by the instantaneous rate of change of direction at the point that crosses the edge $[\mathbf{v}_i, \mathbf{v}_j]$ in the normal direction. In Figure~\ref{fig:DiscreteMeanCan1}, we use the fact \eqref{wstar} in the proof of Theorem~\ref{mainthm} to study the possible discrete mean curvatures associated with the cotangent--weighted Laplacian on the edges $[\mathbf{v}_i, \mathbf{v_j}]$, defined as follows:
\begin{itemize}
\item $f(\theta):=\sin\left(\frac{\pi-\theta}{2}\right)=\cos\frac{\theta}{2}$ \cite{polthier2002,sullivan2008}
\item $g(\theta):=\left(\frac{1-\cos\theta}{\sin\theta}\right)^2\sin^3\left(\frac{\pi-\theta}{2}\right)=\frac{1}{2}\sin\frac{\theta}{2}\sin\theta$ \eqref{assoMean}
\item $p(\theta):=\frac{\sin\theta}{1-\cos\theta}=\cot\frac{\theta}{2}$
\item $q(\theta):=\sin^{-2}\left(\frac{\pi-\theta}{2}\right) \left(\frac{\sin\theta}{1-\cos\theta}\right)^3=\frac{1}{2}\csc^4\frac{\theta}{2}\sin\theta$
\end{itemize}
In Figure~\ref{fig:DiscreteMeanCan}, they all describe local minimal surfaces when the dihedral angle on the edge tends to $\pi$, which means that for infinitesimal mesh size, they all agree with the classical mean curvature on smooth manifolds. However, mesh size cannot be infinitely small in practical applications, so we need to check their sensitivity to instantaneous changes in angle. We apply a M\"{o}bius transformation $e^{\mathrm{i}z}$ on the graphs of the above candidate functions and then we obtain the corresponding graphs $\widetilde{f}$, $\widetilde{g}$, $\widetilde{p}$, and $\widetilde{q}$ in a polar coordinate system as shown in Figures~\ref{fig:DiscreteMeanCan1}. 

Let $\Gamma(r, \theta)$ be a graph in $\mathbb{R}^2\cong\mathbb{C}$, we consider the Taylor expansion $\mathfrak{T}$ of the sum of the $x$-coordinate function ($\mathfrak{Re}(\Gamma(r, \theta))$) and the $y$-coordinate function ($\mathfrak{Im}(\Gamma(r, \theta))$) at $\theta=0$. The sensitivity to instantaneous changes in angle is a comparison of a Taylor expansion about $\Gamma(r,\theta)$ with a Taylor expansion about $e^{\mathrm{i}\theta}$ and the identical polynomial of the same degree represent the approximate order of the angle.
\begin{table}[b]
\centering
\caption{Function describing angular transients, the M\"{o}bius transformation on graph function, Taylor expansion of sum of coordinate functions, and the initial value}
\label{tab:Taylor}
    \begin{tabular}{|c|l|l|l|}
        \hline
\thead{Function} & \thead{M\"{o}bius transformation $e^{\mathrm{i}z}$}& \thead{The sum of Taylor expansions $\mathfrak{T}(\mathfrak{Re})$ and $\mathfrak{T}(\mathfrak{Im})$} & \thead{$\theta=0$}\\
        \hline
$f(\theta)$ & $\widetilde{f}:=e^{-\cos\frac{\theta}{2}}e^{\mathrm{i}\theta}$ & $\mathfrak{T}(\widetilde{f})=\frac{1}{e}+\frac{\theta}{e}-\frac{3\theta^2}{8e}-\frac{\theta^3}{24e}+\mathcal{O}(\theta^4)$ & $1$\\
        \hline
$g(\theta)$ & $\widetilde{g}:=e^{-\frac{1}{2}\sin\frac{\theta}{2}\sin\theta}e^{\mathrm{i}\theta}$ & $\mathfrak{T}(\widetilde{g})=1+\theta-\frac{3\theta^2}{4}-\frac{5\theta^3}{12}+O(\theta^4)$ & $0$\\
        \hline
$p(\theta)$ & $\widetilde{p}:=e^{-\cot\frac{\theta}{2}}e^{\mathrm{i}\theta}$ & $ \mathfrak{T}(\widetilde{p})=e^{-\frac{2}{\theta}+\mathcal{O}(\theta)}\left(1+\theta-\frac{\theta^2}{2}-\frac{\theta^3}{6}+\mathcal{O}(\theta^4)\right)$ & $\infty$\\
        \hline
$q(\theta)$ & $\widetilde{q}:=e^{-\frac{1}{2}\csc^4\frac{\theta}{2}\sin\theta}e^{\mathrm{i}\theta}$ & $\mathfrak{T}(\widetilde{q})=e^{-\frac{8}{\theta^3}+\mathcal{O}(\theta)}\left(1+\theta-\frac{\theta^2}{2}-\frac{\theta^3}{6}+\mathcal{O}(\theta^4)\right)$ & $\infty$\\
        \hline
$\theta$ & $\widetilde{\theta}:=e^{\mathrm{i}\theta}$ & $\mathfrak{T}(\widetilde{\theta})=1+\theta-\frac{\theta^2}{2}-\frac{\theta^3}{6}+\mathcal{O}(\theta^4)$ & $0$\\
        \hline
    \end{tabular}
\end{table}
In Table~\ref{tab:Taylor}, only $\widetilde{g}$, or more accurately $g(\theta)$, has a first-order approximation to the angular function $\theta$ when the instantaneous rate of change of direction crosses the edges $[\mathbf{v}_i, \mathbf{v}_j]$, the others all have jump discontinuities. Thus, we can see that the associated mean curvature derived in \eqref{ansatz} will be more sensitive and accurate compared to other discretizations. 

The solution accuracy depends on the discretization error and the solution error. In terms of discretization error, a given mesh is a discrete approximation of the surface, so even if the equations are solved exactly, only an approximate solution is provided. By reducing the mesh size, the mesh will always be closer to the original surface and the geometry will vary more smoothly and reflect the surface structure. For PDEs, such as \eqref{second}, multiple iterations over the entire mesh are required. One way to prevent the computation from ending earlier before the equations are solved exactly is to improve the quality of the mesh, but at the same time it must be ensured that the approximate order of the geometrical quantities is not less than the approximation order of mesh. Thus, in terms of solution error, we provide a higher order approximation of the mean curvature $|\vec{H}|$ on the edges to reduce the solution error and thus improve the solution accuracy. Numerical applications are beyond the scope of this paper, and below we focus on how to obtain the associated mean curvature, i.e., a higher-order approximation of the mean curvature $|\vec{H}|$  on the edges.

As an important implication of the theory of minimal surfaces, there is another interpretation of the mean curvature vector. Let $f(x)$ for $x\in\mathcal{M}\subset\mathbb{R}^3$ be a regular parametrized surface and assume that $f$ is isothermal\footnote{
Due to the fact that the discrete Laplacian is defined by the energy functional \eqref{approach} and \eqref{dualL}, the discrete version of the isothermal parameterization can be interpreted infinitesimally (locally) on the edge $[f_i, f_j]$ as
\begin{align*}
|df|^2 dvol_{\mathcal{M}}\approx w_{ij}\lambda^2|f_i-f_j|^2.
\end{align*}
This means that the scaling transformations of the discrete Laplacian can imitate the isothermal parameterization. In the subsequent decomposition argument, different scaling transformations not only make the cotangent-weighted Laplacian (intrinsic term) invariant, but also make the mean curvature (extrinsic term) vary with the isothermal factor, which is consistent with the mean curvature theory of isothermal parameterization in \cite[Chap. 4]{osserman1986survey} 
}
. Let $\lambda^2=\langle f_{x_1}, f_{x_1}\rangle=\langle f_{x_2}, f_{x_2}\rangle$, $x=(x_1, x_2)\in\mathcal{M}$, then the equation \eqref{second} can be re-expressed as  
\begin{align*}
\Delta f=2\lambda^2\vec{H}.
\end{align*}
Assuming that $f$ is an isothermal regular parametrized surface, Theorem 3 in \cite{shefel1970totally} states that the surface $f(x)$ is a Riemann surface of class $C^k$, ($k\geq 2$ continuous derivatives) if and only if its mean curvature is a function of class $C^{k-2}$. In other words, the regularity between a surface and its mean curvature corresponds synchronously under an isothermal parameterization         . Based on the mean curvature theory of isothermal parameterization and Theorem 3 in \cite{shefel1970totally}, we can apply the idea of decomposition to improve the approximation order or so-called ``regularity" of the mean curvature.

The key step about deriving the associated mean curvature in ansatz \eqref{ansatz} is the inequality \eqref{ineq:20}. So we apply it to derive higher--order approximation.  Let $m>1$ and $n>1$ are real numbers such that $\frac{1}{m}+\frac{1}{n}=1$ and define
\begin{align*}
a:=\left|\left(\frac{\cot\alpha_{ij}+\cot\beta_{ij}}{2}\right)\mathbf{e}_{ij}\right|^{\frac{1}{m}}
,\quad
b:=\left|\frac{1-\cos\theta_{ij}^{kl}}{\sin\theta_{ij}^{kl}}\right|\left|\left(\frac{\cot\alpha_{ij}+\cot\beta_{ij}}{2}\right)\mathbf{e}_{ij}\right|^{\frac{n+1}{n}},
\end{align*}
By Young's inequality,
we give an ansatz like \eqref{ansatz} and the associated discrete mean curvature on the edges $[\mathbf{v}_i, \mathbf{v_j}]$ as follows,
\begin{align*}
\widetilde{w}_{ijkl}
=&\frac{|[\mathbf{v}_i, \mathbf{v}_j]|}{4n}\left[(n-1)\left(\frac{\cot\alpha_{ij}+\cot\beta_{ij}}{2}\right)^{\top}\right]\nonumber\\
&\quad\quad+\frac{|[\mathbf{v}_i, \mathbf{v}_j]|}{4n}\left(\frac{1-\cos\theta_{ij}^{kl}}{\sin\theta_{ij}^{kl}}\right)^n
\left[\left(\frac{\cot\alpha_{ij}+\cot\beta_{ij}}{2}\right)^{\perp}\right]^{n+1},\\
|\vec{H}_a|
:=&\frac{1}{n-1}\left(\frac{1-\cos\theta_{ij}^{kl}}{\sin\theta_{ij}^{kl}}\right)^n\left[\left(\frac{\cot\alpha_{ij}+\cot\beta_{ij}}{2}\right)^{\perp}\right]^{n+1}\\
=&\frac{1}{n-1}\left(\frac{1-\cos\theta_{ij}^{kl}}{\sin\theta_{ij}^{kl}}\right)^n\sin^{n+1}\left(\frac{\pi-\theta_{ij}^{kl}}{2}\right).
\end{align*}
In Table~\ref{tab:HiOrd}, we take $n=10, 50, 100$ as an example to illustrate the approximation order of the angular function and in Figure~\ref{HigherOder}, each red curve represents an integer $n$ from 2 to 100, and the blue line indicates pointwise convergence to a semicircle. 
\begin{table}[t]
\centering
\caption{The $(n-1)$th order approximation $|\vec{H}_a|$ for angular transients $\theta$}
\label{tab:HiOrd}
    \begin{tabular}{|c|l|l|}
        \hline
\thead{n} & \thead{Associated mean curvature $f:=|\vec{H}_a|$}& \thead{Difference $\mathfrak{T}(\widetilde{f})-\mathfrak{T}(e^{\mathrm{i}\theta})$} \\
        \hline
$10$ & $\frac{1}{9}\left(\frac{1-\cos\theta}{\sin\theta}\right)^{10}\sin^{11}\left(\frac{\pi-\theta}{2}\right)$ & $\mathfrak{T}(\widetilde{f})-\mathfrak{T}(e^{\mathrm{i}\theta})=-\frac{1}{9216}\theta^{10}+O(\theta^{11})$ \\
        \hline
$50$ & $\frac{1}{49}\left(\frac{1-\cos\theta}{\sin\theta}\right)^{50}\sin^{51}\left(\frac{\pi-\theta}{2}\right)$ & $\mathfrak{T}(\widetilde{f})-\mathfrak{T}(e^{\mathrm{i}\theta})=c_{50}\theta^{50}+\mathcal{O}(\theta^{51})$\\
        \hline
$100$ & $\frac{1}{99}\left(\frac{1-\cos\theta}{\sin\theta}\right)^{100}\sin^{101}\left(\frac{\pi-\theta}{2}\right)$ & $\mathfrak{T}(\widetilde{f})-\mathfrak{T}(e^{\mathrm{i}\theta})=c_{100}\theta^{100}+\mathcal{O}(\theta^{101})$\\
        \hline
    \end{tabular}
\end{table} 
\begin{figure}
   \includegraphics[width=1.\linewidth]{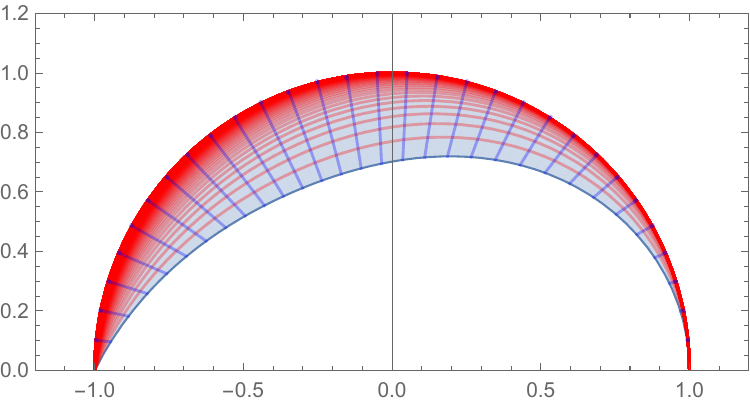}
   \caption{The $(n-1)$th order approximation $|\vec{H}_a|$ of the angular transient $\theta$ under the M\"{o}bius transform $e^{\mathrm{i}z}$  for $2\leq n\leq 100$.}
   \label{HigherOder}
\end{figure}

\section{Conclusions and future work}
First, this paper is based on the projective view of geometric metric theory, which means that the extrinsic Laplacian operator $\Delta_{\mathbb{R}^3}$ can be decomposed into an intrinsic Laplacian operator $\Delta_{\mathbb{R}^2}$ and an extrinsic mean curvature term $\vec{H}$. According to this theory, we study two discrete Laplace operators in $\mathbb{R}^3$, the primal Laplace operator and the dual Laplace operator. After providing a priori estimates, we define the associated Laplacian operator and then rigorously prove that the dual Laplace operator is the optimal approximation of the smooth extrinsic Laplacian--Beltrami operator in $\mathbb{R}^3$ compared to the primal Laplace operator.

Second, for the associated mean curvature, we analyze the composition of its expression and find out its essential geometric significance. Furthermore, we investigate the approximate order of the angular function and realize that, in contrast to other discrete mean curvatures, it not only has a first--order approach, but also provides a higher--order approximation based on geometrical facts about hypersurfaces \eqref{second}.

In the future, we will study conformal mean curvature flow, area--preserving mean curvature flow and conformal Wilmore flow using \eqref{second}. These geometric flow problems play an important role in the classification of surface states. Furthermore, the Euler-Lagrange equations for their corresponding energy functions suggest that the study of these geometric flow problems, which are closely related to the Laplace-Beltrami operator and the mean curvature vector, is an important starting point.

\backmatter

\begin{appendices}
\section{Apriori Estimates}\label{AppendixB}
Under the Delaurnay triangulation, we have the following estimates on the dual Laplacian of a tetrahedron $[\mathbf{v}_i, \mathbf{v}_j, \mathbf{v}_k, \mathbf{v}_l]$
\begin{itemize}
\item\begin{align*}
\cot\alpha_{ij}\cot\beta_{ij}\leq|\cot\alpha_{ij}||\cot\beta_{ij}|\leq\frac{\cot^2\alpha_{ij}+\cot^2\beta_{ij}}{2},
\end{align*}
where the equality holds if and only if $\cot\alpha_{ij}=\cot\beta_{ij}$.
\item We take $\mathrm{K}$ as the part of \eqref{dualL3} that relates to dihedral and face angles,
\begin{align*}
\mathrm{K}\equiv\left(\frac{2\cot\alpha_{ij}\cot\beta_{ij}}{\cos\theta_{ij}^{kl}}-(\cot^2\alpha_{ij}+\cot^2\beta_{ij})\right).
\end{align*}
For $\theta_{ij}^{kl}\in [0, \pi/2)$ and the unit $\mathbf{e}_{ij}:=\frac{[\mathbf{v}_i, \mathbf{v}_j]}{|[\mathbf{v}_i, \mathbf{v}_j]|}$, the upper bound for the operator $\mathrm{K}$ can be estimated as follows, 
\begin{align*}
|\mathrm{K}\mathbf{e}_{ij}|\leq\left(\frac{1}{\cos\theta_{ij}^{kl}}-1\right)M(\alpha_{ij}, \beta_{ij}, \mathbf{e}_{ij}).
\end{align*}
For $\theta_{ij}^{kl}\in (\pi/2, \pi]$ and the unit $\mathbf{e}_{ij}:=\frac{[\mathbf{v}_i, \mathbf{v}_j]}{|[\mathbf{v}_i, \mathbf{v}_j]|}$,
\begin{align*}
|\mathrm{K}\mathbf{e}_{ij}|\leq\left|\frac{1}{\cos\theta_{ij}^{kl}}-1\right|M(\alpha_{ij}, \beta_{ij}, \mathbf{e}_{ij}).
\end{align*}
where the nonnegative function $M(\alpha_{ij}, \beta_{ij}, \mathbf{e}_{ij})$ is determined by
\begin{align*}
M(\alpha_{ij}, \beta_{ij}, \mathbf{e}_{ij})
\equiv\max\left\{\left|(\cot^2\alpha_{ij}+\cot^2\beta_{ij})\mathbf{e}_{ij}\right|, \left|2\cot\alpha_{ij}\cot\beta_{ij}\mathbf{e}_{ij}\right|\right\}.
\end{align*}
Combining both cases, we choose the average upper bound estimate of $\mathrm{K}$
\begin{align}
\label{ineq:upper}
\left|\mathrm{K}\mathbf{e}_{ij}\right|\leq2\left|\frac{1}{\cos\theta_{ij}^{kl}}-1\right|\left|\left(\frac{\cot\alpha_{ij}+\cot\beta_{ij}}{2}\right)\mathbf{e}_{ij}\right|^2.
\end{align}
\begin{remark}
For completeness, we verify the validity of the upper bound \eqref{ineq:upper} under the Delaurnay condition.These a priori estimates have little impact on the main content of the paper and can be skipped if necessary. The derivation is in two parts.
\begin{claim*}
Given $\theta_{ij}^{kl}\in [\pi/2, \pi]$. If
$
A:=2\left(\frac{1}{\cos\theta_{ij}^{kl}}-1\right)\left(\frac{\cot\alpha_{ij}+\cot\beta_{ij}}{2}\right)^2
$, 
$
B:=\frac{2\cot\alpha_{ij}\cot\beta_{ij}}{\cos\theta_{ij}^{kl}}-\cot^2\alpha_{ij}-\cot^2\beta_{ij}
$
, then $A\leq B$.
\end{claim*}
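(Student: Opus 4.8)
The plan is to reduce the claim to a single algebraic identity in three scalar quantities and then read off its sign. Write $a := \cot\alpha_{ij}$, $b := \cot\beta_{ij}$ and $c := \cos\theta_{ij}^{kl}$, so that
\[
A = \frac{(a+b)^2}{2}\left(\frac{1}{c}-1\right), \qquad B = \frac{2ab}{c}-a^2-b^2 .
\]
On the hypothesized range $\theta_{ij}^{kl}\in[\pi/2,\pi]$ one has $c\in[-1,0)$, the left endpoint $\theta_{ij}^{kl}=\pi/2$ being understood in the limiting sense $c\to 0^{-}$ since $1/c$ occurs in both $A$ and $B$. The whole assertion $A\le B$ is therefore equivalent to controlling the sign of $B-A$ on this interval.

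First I would expand $B-A$ directly, separating the terms that carry the factor $1/c$ from those that do not. The $1/c$-part collapses to $\frac{2ab-a^2-b^2}{2c}=-\frac{(a-b)^2}{2c}$, and the remaining part collapses to $-\frac{(a-b)^2}{2}$. Collecting these gives the compact factorization
\[
B-A \;=\; -\frac{(a-b)^2}{2}\left(\frac{1}{c}+1\right) \;=\; -\frac{(a-b)^2}{2}\cdot\frac{1+c}{c}.
\]
This identity is the crux of the argument: once it is in hand no further computation is needed, and it simultaneously exposes the equality case.

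Finally I would read off the sign on $c\in[-1,0)$. There $1+c\ge 0$ while $c<0$, so the quotient $\frac{1+c}{c}$ is nonpositive; multiplying by the nonpositive factor $-\frac{(a-b)^2}{2}$ yields $B-A\ge 0$, i.e.\ $A\le B$, as claimed. Equality holds precisely when $(a-b)^2(1+c)=0$, that is when $\cot\alpha_{ij}=\cot\beta_{ij}$ or $\theta_{ij}^{kl}=\pi$, which is consistent with the equality condition recorded for the first a priori estimate.

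The computation is routine, so there is no deep obstacle; the only points requiring care are the bookkeeping in the factorization step, where a single sign slip would silently invert the final inequality, and the treatment of the endpoint $\theta_{ij}^{kl}=\pi/2$, where $A$ and $B$ each diverge to $-\infty$ but $B-A\to+\infty$, so the inequality persists in the limit. For this reason I would carry out the expansion of $B-A$ \emph{symbolically} rather than numerically, to guarantee that the factorization $-\tfrac{1}{2}(a-b)^2\tfrac{1+c}{c}$ is exact before invoking the sign analysis.
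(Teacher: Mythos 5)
Your proof is correct and follows essentially the same route as the paper: both expand the difference $A-B$ and factor it as $\frac{(\cot\alpha_{ij}-\cot\beta_{ij})^2}{2}\cdot\frac{1+\cos\theta_{ij}^{kl}}{\cos\theta_{ij}^{kl}}$, then read off the sign from $\cos\theta_{ij}^{kl}\le 0$ and $1+\cos\theta_{ij}^{kl}\ge 0$. Your handling of the endpoint $\theta_{ij}^{kl}=\pi/2$ and the explicit statement of the equality cases are marginally more careful than the paper's (which instead adds a redundant case split on the signs of $\cot\alpha_{ij}$ and $\cot\beta_{ij}$), but the key identity is identical.
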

\begin{proof}
We check directly
\begin{align*}
A-B
=&\frac{1-\cos\theta_{ij}^{kl}}{\cos\theta_{ij}^{kl}}\frac{\cot^2\alpha_{ij}+\cot^2\beta_{ij}+2\cot\alpha_{ij}\cot\beta_{ij}}{2}\\
&\quad\quad-\frac{1}{\cos\theta_{ij}^{kl}}\left(2\cot\alpha_{ij}\cot\beta_{ij}-\cot^2\alpha_{ij}\cos\theta_{ij}^{kl}-\cot^2\beta_{ij}\cos\theta_{ij}^{kl}\right)\\
=&\frac{1+\cos\theta_{ij}^{kl}}{2\cos\theta_{ij}^{kl}}\left[(\cot^2\alpha_{ij}+\cot^2\beta_{ij})-2\cot\alpha_{ij}\cot\beta_{ij}\right]
\end{align*}
\begin{enumerate}[label=\arabic*)]
\item For $\cot\alpha_{ij}\geq 0$ and $\cot\beta_{ij}\geq 0$, the bracketed term $[\cdot]$ is nonnegative so under the assumption that $\theta_{ij}^{kl}\in [\pi/2, \pi]$, say $\cos\theta_{ij}^{kl}\leq 0$, we obtain $A-B\leq 0$.
\item For $\cot\alpha_{ij}\geq 0$ and $\cot\beta_{ij}\leq 0$, the bracketed term $[\cdot]$ is also nonnegative so $A-B\leq 0$.
\end{enumerate}
\end{proof}
\begin{claim*} 
Under the Delaurnay condition, there exists $\theta_0\geq \pi/2$ so that $B\leq0$ for $\theta_{ij}^{kl}\in [\theta_0, \pi]$.
\end{claim*}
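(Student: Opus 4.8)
The plan is to reduce the sign condition $B\le 0$ to an explicit inequality on the dihedral angle $\theta_{ij}^{kl}$ by clearing the denominator, holding the face angles $\alpha_{ij},\beta_{ij}$ fixed. Abbreviate $a:=\cot\alpha_{ij}$ and $b:=\cot\beta_{ij}$, so that $B=\frac{2ab}{\cos\theta_{ij}^{kl}}-(a^2+b^2)$. On the range of interest $\theta_{ij}^{kl}\in(\pi/2,\pi]$ we have $\cos\theta_{ij}^{kl}<0$; multiplying the target inequality $\frac{2ab}{\cos\theta_{ij}^{kl}}\le a^2+b^2$ through by the negative quantity $\cos\theta_{ij}^{kl}$ reverses it and yields, after dividing by $a^2+b^2>0$, the equivalent condition $\cos\theta_{ij}^{kl}\le\frac{2ab}{a^2+b^2}$. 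Since $\cos$ is decreasing on $(\pi/2,\pi]$, this is in turn equivalent to $\theta_{ij}^{kl}\ge\arccos\!\left(\frac{2ab}{a^2+b^2}\right)$, and the right-hand side is a legitimate angle because $\bigl|\tfrac{2ab}{a^2+b^2}\bigr|\le 1$ by the AM--GM bound $2|ab|\le a^2+b^2$ recorded in the first bullet of this appendix.

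This identifies the candidate threshold $\theta_0:=\arccos\!\left(\frac{2\cot\alpha_{ij}\cot\beta_{ij}}{\cot^2\alpha_{ij}+\cot^2\beta_{ij}}\right)$, and it remains to check $\theta_0\ge\pi/2$, which holds precisely when $\frac{2ab}{a^2+b^2}\le 0$, i.e. when $ab=\cot\alpha_{ij}\cot\beta_{ij}\le 0$. Here the Delaunay condition is used to organize the two sign regimes: it forces $\cot\alpha_{ij}+\cot\beta_{ij}\ge 0$, so at most one of $a,b$ is negative and the case of two obtuse opposite angles is excluded. When $ab\le 0$ the threshold $\theta_0$ lies in $[\pi/2,\pi]$ and is exactly the claimed value. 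When $ab>0$ the displayed $\arccos$ is smaller than $\pi/2$, so the equivalent condition $\theta_{ij}^{kl}\ge\theta_0$ is automatically met for every $\theta_{ij}^{kl}\ge\pi/2$; in this regime $B\le 0$ on all of $(\pi/2,\pi]$ and one simply sets $\theta_0=\pi/2$. Taking $\theta_0=\max\{\pi/2,\ \arccos(2ab/(a^2+b^2))\}$ covers both regimes and guarantees $\theta_0\ge\pi/2$.

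As a consistency check that frames the estimate, I would record the boundary value $B\big|_{\theta_{ij}^{kl}=\pi}=-2ab-(a^2+b^2)=-(\cot\alpha_{ij}+\cot\beta_{ij})^2\le 0$, which confirms $B$ is already nonpositive at the right endpoint and that $\theta_0=\pi$ occurs exactly in the Delaunay-boundary case $\cot\alpha_{ij}=-\cot\beta_{ij}$. This also shows how the present claim combines with the preceding one: on the overlap $[\theta_0,\pi]$ one has $A\le B\le 0$ with $A\le 0$, whence $|B|\le|A|$, which is the upper bound \eqref{ineq:upper}.

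I expect the only real subtlety to be the behavior near $\theta_{ij}^{kl}=\pi/2$. There $\cos\theta_{ij}^{kl}\to 0^-$ and, when $ab<0$, the term $\tfrac{2ab}{\cos\theta_{ij}^{kl}}$ blows up to $+\infty$, so $B>0$ for dihedral angles just above $\pi/2$; this is precisely what prevents the naive choice $\theta_0=\pi/2$ and forces the explicit threshold above. One must also keep $\theta_{ij}^{kl}=\pi/2$ itself out of the domain, since $B$ is undefined there, so the interval is read as $(\pi/2,\pi]$ with $\theta_0>\pi/2$ strictly whenever $\cot\alpha_{ij}\cot\beta_{ij}<0$.
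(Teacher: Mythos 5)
Your proposal is correct, and it takes a genuinely different route from the paper's. The paper argues by contradiction at the endpoint: at $\theta_{ij}^{kl}=\pi$ one has $B=-(\cot\alpha_{ij}+\cot\beta_{ij})^2$, whose positivity would contradict AM--GM, and it then ``perturbs $\pi$ slightly'' to get a left-neighborhood on which $B\le 0$. You instead solve the inequality exactly: since $\cos\theta_{ij}^{kl}<0$ on $(\pi/2,\pi]$, clearing the denominator reverses the inequality and produces the sharp threshold $\theta_0=\max\left\{\pi/2,\ \arccos\!\left(2ab/(a^2+b^2)\right)\right\}$ with $a=\cot\alpha_{ij}$, $b=\cot\beta_{ij}$. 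Your route buys two things. First, it is quantitative: it locates the sign change of $B$ precisely, and your observation that $B\to+\infty$ as $\theta_{ij}^{kl}\to(\pi/2)^{+}$ when $ab<0$ explains why $\theta_0$ must genuinely exceed $\pi/2$ in that regime. Second, it is more robust at the degenerate Delaunay boundary $\cot\alpha_{ij}=-\cot\beta_{ij}\neq 0$: there $B=-2a^2\left(1/\cos\theta_{ij}^{kl}+1\right)>0$ for every $\theta_{ij}^{kl}\in(\pi/2,\pi)$ and $B=0$ only at $\theta_{ij}^{kl}=\pi$, so the paper's perturbation step has no room to operate and the only admissible threshold is $\theta_0=\pi$ --- which your $\arccos$ formula returns automatically. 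Two small caveats: when $ab>0$ you set $\theta_0=\pi/2$, but $B$ is undefined there, so the interval must be read as open at the left endpoint (you note this); and, as you also observe, the Delaunay hypothesis is not actually load-bearing in your argument --- the threshold exists for any face angles with $a^2+b^2>0$ --- it merely organizes the sign cases.
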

\begin{proof}
Proof by contradiction . We choose $\theta_{ij}^{kl}=\pi$, which contradicts the AM-GM inequality, so we can perturb $\pi$ slightly and then $B\leq0$ for $\theta_{ij}^{kl}\in [\theta_0, \pi]$.
\end{proof}
\end{remark}
\end{itemize}

Next, we consider a matrix (an operator) $\mathbf{L}:= (w_{ij})$ where $w_{ij}$ is determined by \eqref{dualL2} and \eqref{dualL3}. So $\mathbf{L}$ acts on the unit edge $\mathbf{e}_{ij}$ as follows,
\begin{align}
|w_{ijkl}\mathbf{e}_{ij}|
=&\frac{|[\mathbf{v}_i, \mathbf{v}_j]|}{8}\left|\cot\theta_{ij}^{kl}\left(\frac{2\cot\alpha_{ij}\cot\beta_{ij}}{\cos\theta_{ij}^{kl}}-(\cot^2\alpha_{ij}+\cot^2\beta_{ij})\right)\mathbf{e}_{ij}\right|\nonumber\\
\leq&\frac{|[\mathbf{v}_i, \mathbf{v}_j]|}{8}\left|\cot\theta_{ij}^{kl}\left(\frac{1}{\cos\theta_{ij}^{kl}}-1\right)\right|\left|\frac{(\cot\alpha_{ij}+\cot\beta_{ij})^2}{2}\mathbf{e}_{ij}\right|\label{ineq:19}\\
=&\frac{|[\mathbf{v}_i, \mathbf{v}_j]|}{4}\left|\frac{1-\cos\theta_{ij}^{kl}}{\sin\theta_{ij}^{kl}}\right|\left|\left(\frac{\cot\alpha_{ij}+\cot\beta_{ij}}{2}\right)^2\mathbf{e}_{ij}\right|\nonumber\\
\leq&\frac{|[\mathbf{v}_i, \mathbf{v}_j]|}{8}\left|\frac{\cot\alpha_{ij}+\cot\beta_{ij}}{2}\mathbf{e}_{ij}\right|\nonumber\\
&+\frac{|[\mathbf{v}_i, \mathbf{v}_j]|}{8}\left|\frac{1-\cos\theta_{ij}^{kl}}{\sin\theta_{ij}^{kl}}\right|^2\left\|\mathbf{L}(\mathbf{e}_{ij})\right\|^2\left|\frac{\cot\alpha_{ij}+\cot\beta_{ij}}{2}\mathbf{e}_{ij}\right|\label{ineq:20},
\end{align}
where the operator norm of $\mathbf{L}$ acting on the unit edge $\mathbf{e}_{ij}$ is defined as
\begin{align}\label{eq:opnorm}
    \|\mathbf{L}(\mathbf{e}_{ij})\|\equiv\sup_{|\mathbf{e}_{ij}|=1}\frac{|\mathbf{L}\mathbf{e}_{ij}|}{|\mathbf{e}_{ij}|}=\sup_{|\mathbf{e}_{ij}|=1}\left|\left(\frac{\cot\alpha_{ij}+\cot\beta_{ij}}{2}\right)\mathbf{e}_{ij}\right|.
\end{align}
Note that the last inequality in \eqref{ineq:20} can be derived by applying AM--GM inequality to $a$ and $b$ which are defined respectively as 
\begin{align*}
a:=\left|\left(\frac{\cot\alpha_{ij}+\cot\beta_{ij}}{2}\right)\mathbf{e}_{ij}\right|^{\frac{1}{2}}
,\quad
b:=\left|\frac{1-\cos\theta_{ij}^{kl}}{\sin\theta_{ij}^{kl}}\right|\left|\left(\frac{\cot\alpha_{ij}+\cot\beta_{ij}}{2}\right)\mathbf{e}_{ij}\right|^{\frac{3}{2}}.
\end{align*}
Finally, the first equality in \eqref{ineq:19} holds if and only if
$
\cot\alpha_{ij}=\cot\beta_{ij}
$
and the second equality in \eqref{ineq:20} holds if and only if
\begin{align*}
\left|\frac{1-\cos\theta_{ij}^{kl}}{\sin\theta_{ij}^{kl}}\right|
=&\frac{1}{\|\mathbf{L}(\mathbf{e}_{ij})\|}=\frac{1}{\sup_{|\mathbf{e}_{ij}|=1}\left|\left(\frac{\cot\alpha_{ij}+\cot\beta_{ij}}{2}\right)\mathbf{e}_{ij}\right|}, \quad\text{(by \eqref{eq:opnorm})}\\
\stackrel{\dag}{=}&\frac{1}{|\vec{H}_{\mathbf{e}_{ij}}|}=\frac{1}{\sin((\pi-\theta_{ij}^{kl})/2)}, \quad\text{(by \eqref{discreteE2})}.
\end{align*}
Up to this point, the above argument is just a series of algebraic derivations without introducing any geometric perspective. Since we want the second equality to hold true, the geometric and the embedding relationships between the submanifold, the tetrahedron $[\mathbf{v}_i, \mathbf{v}_j, \mathbf{v}_k, \mathbf{v}_l]$, and the ambient space $\mathbb{R}^3$ have to be treated carefully. Thus, we further consider Definition~\ref{discreteEdge} and Remark~\ref{rmkVE}, and then these observations provide us with clues to require the third equality $\dag$ and establish the ansatz \eqref{ansatz}.




\end{appendices}



\end{document}